\date{}
\newtheorem{theorem}{Theorem}
\newtheorem{corollary}[theorem]{Corollary}
\newtheorem{lemma}[theorem]{Lemma}
\numberwithin{equation}{section}
\numberwithin{theorem}{section}
\newcommand{\keywords}[1]{\par\noindent\textbf{Keywords:} #1}
\newcommand{\subjclass}[2]{
  \par\noindent\textbf{Mathematics Subject Classification} #2}
\begin{document}

\title{A Liouville theorem for the CR Yamabe type equation on Sasakian manifolds }

\author{ Biqiang Zhao\footnote{
		E-mail addresses: 2306394354@pku.edu.cn}}
            
\maketitle
\setlength{\parindent}{2em}

\begin{abstract}
%% Text of abstract
In this paper, we study the CR Yamabe type equation 
\begin{align}
         \Delta_b u+F(u)=0 \nonumber
     \end{align}
on complete noncompact $(2n+1)$-dimensional Sasakian manifolds with nonnegative curvature. Under some assumptions, we prove a rigidity result, that is, the manifold is CR isometric to Heisenberg group $\mathbb{H}^n$. The proofs are based on the Jerison-Lee's differential identity combining with integral estimates.

\end{abstract}

\keywords{ Liouville theorem, CR Yamabe equation, semilinear elliptic equation }
\subjclass [{  32V20 · 35J61 · 35R03 · 53C25 }

%% \linenumbers

%% main text
\section{Introduction}
\label{1}
     In this paper, we study the Liouville type theorems for solutions of semilinear equation 
     \begin{align}
         \Delta_b u+2n^2F(u)=0
     \end{align}
     on Sasakian manifolds. 
     \par
     The equation (1.1) is closely related to the CR Yamabe equation. Let $\Theta$ be the standard contact form on $\mathbb{H}^n$. We consider the conformal form $\theta=u^{\frac{2}{n}}\Theta$, where $u$ is a smooth positive function on $\mathbb{H}^n$. Then the pseudo-Hermitian scalar curvature associated to $\theta$ is the positive constant $4n(n+1)$ if $u$ satisfies the equation
     \begin{align}
         \Delta_b u+2n^2u^{\frac{Q+2}{Q-2}}=0,
     \end{align}
     where $Q=2n+2$ is the homogeneous dimension of $\mathbb{H}^n$. By the work \cite{JL} of Jerison and Lee, there are nontrivial solutions as follows
     \begin{align}
          u(z,t):=\frac{C}{|t+i|z|^2+z\cdot \mu +\lambda|^n},
      \end{align}
      for some $C>0,\ \lambda\in \mathbb{C},\ \mu\in \mathbb{C}^n$ such that $ \mathrm{Im}(\lambda)>\frac{|\mu|^2}{4}$. In particular, Jerison and Lee \cite{JL} obtained the uniqueness of the solutions in case of finite  energy assumption, i.e., $u\in L^{\frac{2Q}{Q-2}}(\mathbb{H}^n)$. Garofalo and Vassilev \cite{GV} also got a uniqueness result under the assumption of cylindrical symmetry. In \cite{CLMR}, Catino, Li, Monticelli, Roncoroni
      proved the classification of solutions, provided $n=1$ or $n\geq 2$ and a suitable control at infinity holds (see also \cite{FV}). Recently, Prajapat and Varghese \cite{PV} got the classification of solutions of (1.1) using
      the moving plane method. For general Sasakian manifolds, Catino, Monticelli, Roncoroni and Wang \cite{CMRW} generalized the result in \cite{CLMR} to complete noncompact $(2n+1)$-dimensional Sasakian manifolds with nonnegative curvature. 
      \par
       In the subcritical case, i.e.,
       \begin{align}
           \Delta_b u+2n^2u^{q}=0,\quad in\quad \mathbb{H}^n,
       \end{align}
       where $1<q<\frac{Q+2}{Q-2}$, Ma and Ou \cite{MO} proved that the only nonnegative solution is the trivial one.
       \par
        Before presenting our main results, we mention the corresponding equation in Euclidean space,
        \begin{align}
            \Delta u+u^{q}=0,\quad in\quad \mathbb{R}^n.
        \end{align}
        In the seminal paper \cite{GS}, Gidas and Spruck proved that there are no positive solutions to (1.5), if $n\geq 3$ and $1<q<\frac{n+2}{n-2}$. For the critical case $q=\frac{n+2}{n-2}$,  Caffarelli, Gidas and Spruck \cite{CGS} showed that any solution to (1.5) is given by 
        \begin{align*}
          u(x)= \left( \frac{1}{a+b|x-x_0|^2}\right)^{\frac{n-2}{2}},
      \end{align*}
      where $a,b>0,\ ab=\frac{1}{n(n-2)} $ and $x_0\in \mathbb{R}^n$. Under the  finite energy assumption, Catino and Monticelli \cite{CM} proved the classification of solutions to (1.5) with $q=\frac{n+2}{n-2}$ in the Riemannian setting when the Ricci curvature is non-negative. Besides, Li and Zhang \cite{LZ} studied the following equation 
      \begin{align}
          \Delta u+f(u)=0,\quad in\quad \mathbb{R}^n,
      \end{align}
      and proved that: If $f$ is locally bounded in $ (0,\infty)$ and $f(s)s^{-\frac{n+2}{n-2}}$ is non-increasing in $ (0,\infty)$. Let $u$ be a positive classical solution of (1.6) with $n\geq 3$, then either for some $b>0$
      \begin{align*}
          bu=\left (  \frac{\mu}{1+\mu ^2|x-\bar{x}|^2}\right )^{\frac{n-2}{2}}
      \end{align*}
      for $\mu >0,\bar{x}\in \mathbb{R}^n$ or $u\equiv a$ for some $a > 0$ such that $f(a) = 0$. Such Liouville type theorems have also been generalized for
      $p$-Laplacian. In \cite{SZ}, Serrin and Zou extended Gida and Spruck's result to quasilinear elliptic equations. We refer to \cite{MMP,GSXX,HSW,CMR} and references therein for further results.
      \par
      Returning to the equation (1.1), we first give some definitions. We say that $f\in C^0([0,\infty))\cap C^1((0,\infty))$ is subcritical with exponent $\sigma$ if 
        \begin{align*}
            f(t)>0,\quad \sigma f(t)-tf^{'}(t)\geq 0,\quad   \forall t>0.
        \end{align*}
        Different from \cite{SZ,Zbq}, we request that $f(t)>0$. It is easy to verify that $f$ is a subcritical function with exponent $\sigma$ implying that $ t^{-\sigma}f(t)$ is non-increasing in $(0,\infty)$. In the following, we denote by $B_R$ the ball of radius $R$ centered at a fixed point with respect to the Carath$\acute{\mathrm{e}}$odory distance (see Section 2) and denote $A_R=B_{2R}\setminus B_{R}$. The volume of $B_R$ is denoted by $V(R)$ .
        \par
        In this paper, we study the equation (1.1) on complete noncompact Sasakian manifolds with nonnegative pseudo-Hermitian Ricci curvature $Ric_b$. Indeed, we have
      \begin{theorem}
          Let $(M^3,HM,J,\theta)$ be a 3-dimensional complete noncompact Sasakian manifold with nonnegative Tanaka-Webster scalar curvature and assume that $F$ is a subcritical function with exponent $3$. Let $u$ be a positive solution to (1.1) such that $u$ tends to zero at infinity.  If there exists a constant $2<\kappa\leq 3$ such that 
          \begin{align}
              tF^{'}(t)-\kappa F(t)\geq 0,\ \forall t>0,
          \end{align} 
          then $(M^3,HM,J,\theta) $ is isometric to $\mathbb{H}^1$ with its standard structures and $u$ is the form of (1.3).
      \end{theorem}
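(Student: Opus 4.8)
\emph{Proof strategy.} The plan is to establish a Jerison--Lee type divergence identity for a conformally adapted quantity, integrate it against a carefully chosen weight on the annuli $A_R$, and let $R\to\infty$; the subcriticality hypotheses guarantee that the resulting integrand is a sum of nonnegative terms, whose forced vanishing yields simultaneously the rigidity of $M$ and the explicit form of $u$.

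Since $n=1$, the critical exponent is $(Q+2)/(Q-2)=3$, so the subcriticality hypothesis $3F(t)-tF'(t)\ge 0$ bounds the growth of $F$ from above by $t^3$, while (1.7) bounds it from below by $t^\kappa$. First I would pass to the conformal picture attached to $\tilde\theta=u^{2/n}\theta=u^{2}\theta$, rewriting (1.1) in terms of the conformal factor and its horizontal (pseudo-Hermitian) covariant derivatives. The key structural simplification is that on a Sasakian manifold the pseudo-Hermitian torsion $A_{\alpha\beta}$ vanishes identically; this removes the torsion contributions that would otherwise obstruct the sign in the Jerison--Lee identity, and is precisely the reason the Sasakian assumption is imposed. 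Following Jerison and Lee, I would then introduce the trace-free symmetric tensor $E_{\alpha\beta}$ built from the second horizontal derivatives of the conformal factor (the CR analogue of the trace-free Hessian), which vanishes exactly on the Heisenberg bubbles (1.3). The heart of the matter is a pointwise identity of the form $\operatorname{div}X=c\,|E|^{2}+Q_{\mathrm{Ric}}+\mathcal{F}(u)$, where $X$ is an explicit horizontal vector field, $c>0$, the term $Q_{\mathrm{Ric}}$ is controlled by the Tanaka--Webster scalar curvature (hence $Q_{\mathrm{Ric}}\ge 0$ when $\mathrm{Ric}_b\ge 0$), and $\mathcal{F}(u)$ is a combination of $tF'(t)-\kappa F(t)$ and $3F(t)-tF'(t)$. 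Under $\mathrm{Ric}_b\ge0$, $3F-tF'\ge 0$ and (1.7), every term on the right is nonnegative.

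Next I would multiply the identity by a weight $w(u)$ (a suitable power of $u$, chosen so that the $\mathcal{F}$-terms retain their sign and the divergence term becomes integrable) and by a Lipschitz cutoff $\varphi_R$ equal to $1$ on $B_R$ and supported in $B_{2R}$, then integrate by parts. The error terms are then supported on $A_R$ and estimated by $\int_{A_R}|X|\,|\nabla_b\varphi_R|\,w\lesssim R^{-1}\int_{A_R}(\cdots)$. Here I would combine two ingredients: the hypothesis that $u\to 0$ at infinity (upgraded, if necessary, to a quantitative decay rate via a gradient estimate), and the control on $V(R)$ coming from $\mathrm{Ric}_b\ge0$ through the CR Bishop--Gromov comparison. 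The requirement $2<\kappa\le 3$ is exactly what makes these weighted integrals converge and the error contributions vanish as $R\to\infty$. I expect this boundary analysis to be the main obstacle and the technically heaviest step: one must calibrate the weight $w$ and the cutoff $\varphi_R$ so that the good divergence term dominates, and extract enough decay of $u$ together with enough volume control to annihilate all error terms in the limit.

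Letting $R\to\infty$ then forces $\int_M\bigl(c\,|E|^{2}+Q_{\mathrm{Ric}}+\mathcal{F}(u)\bigr)=0$, so each summand vanishes identically. From $E\equiv 0$ I would deduce that the conformal factor satisfies the rigid overdetermined system characterizing the Heisenberg bubbles, which, together with the rigidity forced by $Q_{\mathrm{Ric}}\equiv0$, shows that $(M^3,J,\theta)$ is CR isometric to $\mathbb{H}^1$ with its standard structures. Simultaneously $\mathcal{F}(u)\equiv0$ turns the inequalities into equalities, i.e.\ $tF'(t)=3F(t)$, so that $F(t)=ct^{3}$ is the critical nonlinearity; equation (1.1) is then the CR Yamabe equation, and the uniqueness result of Jerison and Lee identifies $u$ with the explicit solution (1.3), completing the proof.
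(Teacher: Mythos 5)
Your overall strategy is the paper's: pass to $f=\tfrac1n\log u$, invoke the Jerison--Lee differential inequality (Lemma 2.1, whose right-hand side is a sum of nonnegative terms precisely because the Sasakian torsion vanishes, $Ric_b\geq0$, and $F$ satisfies subcriticality together with (1.7)), integrate against a weight times a cutoff $\phi^s$ supported in $B_{2R}$, kill the annulus errors using the volume bound $V(R)\leq CR^4$ of Theorem 1.4 and the behaviour of $u$ at infinity, conclude $\mathcal{M}\equiv0$, hence $tF'(t)=3F(t)$, i.e.\ $F(t)=ct^3$, and finish by the known classification of the critical equation.

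The step you yourself flag as the main obstacle is where your proposal has a genuine gap, because the weight you suggest --- a power of $u$ --- is not the one that closes the argument. The paper uses $w=\Psi^{-\beta}$ with $\Psi=|g|^2e^{-2f}$, $g=|\partial f|^2+H(f)-\sqrt{-1}f_0$, $H(f)=F(u)/u$ and $\beta>0$ strictly positive but small. The decisive point (Lemma 3.3, the $n=1$ case) is that differentiating the factor $H(f)^{-2\beta}$ hidden in this weight produces, upon integrating $\int_{A_R}|\partial f|^4(\cdot)$ by parts, the coefficient $2\beta(H'/H-1)\geq 2\beta(\kappa-2)>0$ in front of the quartic gradient term, while giving the term $\int_{A_R}|\partial f|^2H(f)(\cdot)$ a favourable sign. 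This is exactly, and only, where the hypothesis $\kappa>2$ enters; it is not a matter of generic convergence of weighted integrals. With a pure power of $u$ the same computation instead bounds $2\beta\int_{A_R}|\partial f|^4(\cdot)$ by $2\int_{A_R}|\partial f|^2H(f)(\cdot)$ with the wrong sign, and for $n=1$ there is no assumed integral control on $F(u)/u$ (unlike (1.9) and (1.11) in Theorems 1.2 and 1.3) to absorb it. Two further ingredients you do not mention are needed to complete this lemma: the pinching $4(1-n)H\leq H'\leq 2H$ combined with $u\to0$ at infinity yields $H(f)\geq Ce^{2f}$ on $A_R$ for large $R$, and the superharmonic lower bound $u\gtrsim r^{-3}$ of Corollary 2.3 converts the negative powers of $u$ created by the weight into a harmless factor $R^{54\beta}$. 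Finally, a caveat on your endgame: the Jerison--Lee uniqueness theorem on $\mathbb{H}^1$ requires finite energy, which is not assumed here; the paper instead concludes directly from $F(u)=Cu^{3}$ via the Sasakian classification of \cite{CMRW}, which needs no such hypothesis.
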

      \begin{theorem}
          Let $(M^5,HM,J,\theta)$ be a 5-dimensional complete noncompact Sasakian manifold with $Ric_b\geq 0 $ and 
           \begin{align*}
               V(R)\leq C R^{6}
           \end{align*}
          for some $C > 0$ and $R > 0$ large enough. Assume that $F$ is a subcritical function with exponent $2$. Let $u$ be a positive solution to (1.1) such that $u$ tends to zero at infinity. If there exist constants $\frac{3}{2}<\kappa\leq 2 $ and $\sigma<2$ such that 
          \begin{align}
              tF^{'}(t)-\kappa F(t)\geq 0,\ \forall t>0,
          \end{align}
          and 
          \begin{align}
              \int_{A_R} F^3(u)u^{-3}\leq CR^\sigma
          \end{align}
          for $\sigma<2$ and $R>0$ large enough. Then $(M^5,HM,J,\theta) $ is isometric to $\mathbb{H}^2$ with its standard structures and $u$ is the form of (1.3).
      \end{theorem}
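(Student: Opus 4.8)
The plan is to follow the same Jerison--Lee differential-identity strategy that underlies Theorem 1.1, but now in dimension five where the pseudo-Hermitian Ricci curvature $Ric_b\ge 0$ (rather than just scalar curvature) and the volume/integral growth hypotheses enter. I would first set $v=u^{-\beta}$ for an exponent $\beta$ tuned to the critical weight (here $\beta$ related to $(Q-2)/2=2$ when $n=2$), so that the CR Yamabe-type structure becomes visible, and introduce the Jerison--Lee tensor $E$ (the trace-free part of the second-order horizontal Hessian combined with curvature and the contact form), whose vanishing characterizes the Heisenberg model. The key algebraic input is the pointwise Jerison--Lee differential identity, which expresses a weighted divergence of a vector field built from $v$ and $\nabla_b v$ as a sum of manifestly nonnegative terms: $|E|^2$, a term proportional to $Ric_b(\nabla_b v,\nabla_b v)\ge 0$, and a term controlled by the subcritical sign condition $tF'(t)-\kappa F(t)\ge 0$ with $\frac32<\kappa\le 2$. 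The heart of the matter is to check that with the exponent $\sigma$ appearing in the subcriticality of $F$ (with $\sigma<2$) all the "error" terms coming from $F$ rather than the pure power carry a favorable sign or are absorbable into the good terms.

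Next I would integrate this identity against a cutoff $\varphi=\eta^{2}$, where $\eta$ is a standard logarithmic or radial cutoff supported in $B_{2R}$, equal to $1$ on $B_R$, with $|\nabla_b\eta|\lesssim R^{-1}$ and support of $\nabla_b\eta$ in the annulus $A_R$. Integration by parts moves derivatives onto $\eta$, producing a main (good, nonnegative) bulk term plus boundary/annular error terms. The error terms are estimated by Cauchy--Schwarz and Young's inequality, so that they are bounded by $\epsilon\,(\text{good term})$ plus $C_\epsilon$ times an annular integral of the schematic form $\int_{A_R}F^3(u)u^{-3}$ weighted by $|\nabla_b\eta|$-powers; this is exactly where hypothesis (1.10), $\int_{A_R}F^3(u)u^{-3}\le CR^\sigma$ with $\sigma<2$, is consumed, together with the volume bound $V(R)\le CR^{6}$ (the homogeneous-dimension scaling $Q=6$ for $n=2$) to control the remaining curvature-gradient terms. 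The careful bookkeeping of which power of $R^{-1}$ from $|\nabla_b\eta|$ pairs with which integral is the decisive estimate: one needs the total annular contribution to scale like $R^{\sigma-2}\to 0$ as $R\to\infty$.

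Letting $R\to\infty$ then forces the bulk integral of the nonnegative sum to vanish, whence $E\equiv 0$, $Ric_b(\nabla_b v,\nabla_b v)\equiv 0$, and $tF'(t)-\kappa F(t)\equiv 0$ along the solution. The vanishing of the Jerison--Lee tensor $E\equiv 0$ is the rigidity mechanism: by the structural classification already invoked in the Heisenberg case, $E\equiv 0$ on a complete Sasakian manifold together with the equation (1.1) pins down the geometry, forcing $(M^5,J,\theta)$ to be CR isometric to $\mathbb{H}^2$ with its standard structures, and simultaneously the ODE $tF'(t)=\kappa F(t)$ identifies $F$ as the pure critical power, so that $u$ must take the explicit form (1.3). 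I would invoke the $E\equiv 0$ classification rather than reprove it.

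The main obstacle I anticipate is not the conceptual rigidity step but the quantitative integral estimate in the middle: in dimension five one cannot rely on the scalar-curvature-only argument of Theorem 1.1, and the extra $F$-dependent terms (the deviation of $F$ from a pure power, measured by $tF'-\kappa F$ and by $\sigma<2$) must be simultaneously bounded against the cutoff errors. Getting all exponents to close --- so that the annular error is $O(R^{\sigma-2})$ and vanishes in the limit, while the good terms survive --- requires the precise interplay of $V(R)\le CR^{6}$, the growth bound (1.10), and a sharp choice of the substitution exponent $\beta$ and the cutoff power. I expect that, just as in Theorem 1.1, one must also verify an a priori gradient decay for $u$ (using $u\to 0$ at infinity and a Harnack/gradient estimate) to justify discarding genuine boundary terms and to legitimize the integration by parts on the noncompact manifold.
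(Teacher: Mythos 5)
Your overall strategy --- integrate the Jerison--Lee differential identity against a cutoff supported in $B_{2R}$, absorb the annular errors by Young's inequality, and use the hypotheses $\int_{A_R}F^3(u)u^{-3}\le CR^\sigma$ and $V(R)\le CR^6$ to force the bulk term to vanish as $R\to\infty$ --- is exactly the paper's (Sections 2 and 4), so the architecture is right. Two points need correction or completion. First, the rigidity step: what vanishes in the limit is the term weighted by $2H-H'$, i.e.\ the subcriticality inequality $(1+\tfrac2n)F(t)-tF'(t)\ge0$ with the \emph{critical} exponent $2$ for $n=2$; the saturated ODE is therefore $tF'(t)=2F(t)$, giving $F(u)=Cu^{2}$. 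It is not $tF'(t)=\kappa F(t)$, and $\kappa$ may be strictly less than $2$; the $\kappa$-condition is only used to make the coefficients in the Jerison--Lee inequality nonnegative (via $H'\ge(n\kappa-n)H$) and to close one integration by parts. Second, the part you leave schematic is where all the work is: the cutoff argument produces annular integrals of $e^{2(n-1)f}|\partial f|^2$ and $e^{2(n-1)f}|\partial f|^4$, and these are \emph{not} directly controlled by (1.9). The paper converts them, by integrating the equation $f_{\alpha\bar\alpha}=-ng$ by parts again (Lemmas 3.1, 3.2 and 4.1), into integrals of $H(f)$ and $H^2(f)$, then uses $H(f)=F(u)/u\ge Ce^{2f}$ for large $R$ (which follows from $u\to0$ at infinity together with the monotonicity of $t^{-2}F(t)$ --- no gradient decay or boundary-term justification is needed) and H\"older's inequality against $\int_{A_R}H^3(f)=\int_{A_R}F^3(u)u^{-3}\le CR^\sigma$ and $V(2R)\le CR^6$. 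Moreover, for $n=2$ this chain only closes with the weight $\Psi^{-\beta}=|g|^{-2\beta}e^{2\beta f}$ for a strictly positive small $\beta$: after integration by parts the relevant coefficient is $1+\beta(3-2\kappa)$, which is less than $1$ precisely because $\kappa>\tfrac32$ and $\beta>0$. Without identifying this mechanism one cannot show the annular error is $O(R^{\sigma-2}+R^{-2+2\beta})$ and hence tends to zero.
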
   
      For higher dimensional case, we give a similar result under the integral condition.
      \begin{theorem}
          Let $(M^{ 2n+1}, HM, J,\theta),\ n\geq 3$ be a complete noncompact Sasakian manifold with $Ric_b\geq 0 $ and 
           \begin{align*}
               V(R)\leq C R^{2n+2}
           \end{align*}
           for some $C > 0$ and $R > 0$ large enough.
           Assume that $F$ is a subcritical function with exponent $1+\frac{2}{n}$. Let $u$ be a positive solution to (1.1) such that $u$ tends to zero at infinity. If there exists constants $\frac{2}{n}<\kappa\leq 1+\frac{2}{n} $
          \begin{align}
              tF^{'}(t)-\kappa F(t)\geq 0,\ \forall t>0,
          \end{align}
          and 
          \begin{align}
              \int_{A_R} F^{n+1}(u)u^{-n-1}\leq CR^2
          \end{align}
          for $R>0$ large enough. Then $(M^{2n+1},HM,J,\theta) $ is isometric to $\mathbb{H}^n$ with its standard structures and $u$ is the form of (1.3).
      \end{theorem}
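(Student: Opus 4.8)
The plan is to follow the Jerison--Lee differential identity approach combined with integral estimates, exactly as advertised in the abstract, since this is the higher-dimensional analogue of the preceding two theorems. The strategy rests on a pointwise differential inequality (the CR analogue of the classical Obata/Jerison--Lee identity) that measures the failure of $u$ to be an extremal. Concretely, after a conformal change $\theta=u^{2/n}\Theta$ one studies the pseudo-Hermitian connection associated with the new contact form and forms a nonnegative quantity $E$ built from the trace-free part of a second-order tensor (essentially $\nabla^2 u^{-1/n}$ corrected by curvature and a multiple of $\theta$). The key analytic input is an integral identity of the shape
\begin{align}
\int_M \varphi^2\, u^{\alpha}\, |E|^2 \;\leq\; \int_M \bigl(\text{curvature terms} + \text{terms involving } tF'(t)-\kappa F(t)\bigr)\, u^{\alpha}\,\varphi^2 + \int_M (\text{gradient of cutoff}),\nonumber
\end{align}
where $\varphi$ is a cutoff supported in $B_{2R}$ with $\varphi\equiv 1$ on $B_R$ and $|\nabla_b\varphi|\leq C/R$. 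First I would reproduce this weighted Bochner--type identity for general $n\geq 3$, keeping careful track of the exponent $1+\tfrac2n$ (the subcriticality exponent here) and the range $\tfrac2n<\kappa\le 1+\tfrac2n$; the monotonicity $t^{-\sigma}F(t)$ nonincreasing and the sign condition $tF'(t)-\kappa F(t)\ge 0$ feed directly into making the right-hand ``reaction'' terms have a favorable sign.

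Next I would estimate the cutoff error term. This is precisely where the hypotheses $V(R)\le CR^{2n+2}$ and the new integral bound $\int_{A_R}F^{n+1}(u)u^{-n-1}\le CR^2$ enter. The gradient-of-cutoff contribution is controlled by $R^{-2}\int_{A_R}(\cdots)$, and by Hölder's inequality applied with the conjugate exponents matched to the power $n+1$ appearing in \eqref{} (I would write it as a product of an $L^{n+1}$ norm of $F(u)u^{-1}$ against an $L^{(n+1)/n}$ factor absorbed by the volume growth), the annular integral is bounded by $R^{-2}\cdot (CR^2)^{1/(n+1)}\cdot V(2R)^{n/(n+1)}$-type quantities. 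The design of the exponents is such that the volume growth $R^{2n+2}=R^{Q}$ exactly balances against the $R^2$ in the integral hypothesis so that the whole error tends to $0$ as $R\to\infty$. I would verify this bookkeeping explicitly, since the choice $\int F^{n+1}u^{-n-1}\le CR^2$ is evidently tuned to close this estimate.

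Letting $R\to\infty$ then forces $\int_M u^{\alpha}|E|^2=0$, hence $E\equiv 0$ pointwise. The final step is the rigidity conclusion: $E\equiv 0$ says that $u^{-1/n}$ realizes equality in the Jerison--Lee identity, which in turn forces the pseudo-Hermitian curvature to vanish and pins down the conformal factor. Combined with $Ric_b\ge0$ and completeness, $E\equiv0$ implies the Tanaka--Webster curvature is identically zero and the torsion vanishes, so $(M,J,\theta)$ is CR-flat and Sasakian, hence CR isometric to $\mathbb{H}^n$; transporting the equation to $\mathbb{H}^n$ and invoking the Jerison--Lee classification \cite{JL} (via the subcriticality forcing $F(u)$ to behave like the critical power along the solution, so that $u\to0$ at infinity selects the bubble) yields that $u$ has the explicit form \eqref{}. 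The main obstacle I anticipate is the cutoff/Hölder bookkeeping in the second step: one must choose the weight exponent $\alpha$ and the integration-by-parts so that every curvature and reaction term carries a sign or is absorbable, and so that the two growth hypotheses combine to give a vanishing error; getting the exponent $n+1$ in \eqref{} to be the exact dual of the volume exponent is delicate and is the technical heart of extending the $n=2$ argument (Theorem~1.2) to all $n\ge3$.
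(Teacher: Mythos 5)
Your overall strategy matches the paper's: substitute $e^f=u^{1/n}$, use the Jerison--Lee nonnegative divergence quantity $\mathcal{M}$ (the paper's version of your $|E|^2$), integrate against a cutoff, and use H\"older with conjugate exponents $n+1$ and $\tfrac{n+1}{n}$ to play the hypothesis $\int_{A_R}F^{n+1}(u)u^{-n-1}\le CR^2$ against $V(R)\le CR^{2n+2}$. However, there are two genuine gaps. First, the bookkeeping does \emph{not} make the cutoff error tend to zero, contrary to your claim: the worst term is $\tfrac{C}{R^2}\int_{A_R}H^{n+1}(f)\le \tfrac{C}{R^2}\cdot CR^2=C$, so the direct estimate only yields $\int_M\mathcal{M}\phi^s\le C$ uniformly in $R$. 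The paper needs a second pass: having established $\int_M\mathcal{M}<\infty$, it returns to the Cauchy--Schwarz form of the cutoff lemma,
\begin{align*}
\int_M\mathcal{M}\phi^s\le C\Bigl(\int_{A_R}\mathcal{M}\phi^s\Bigr)^{\frac{1}{2}}\Bigl(\tfrac{1}{R^2}\int_{A_R}e^{2(n-1)f}|g|^2\phi^{s-2}\Bigr)^{\frac{1}{2}}\le C\Bigl(\int_{A_R}\mathcal{M}\phi^s\Bigr)^{\frac{1}{2}},
\end{align*}
and the first factor vanishes as $R\to\infty$ because it is the tail of a convergent integral. Without this two-step argument your plan stalls at a bounded, possibly nonzero, quantity.

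Second, the annular integral produced by the cutoff is $\tfrac{1}{R^2}\int_{A_R}e^{2(n-1)f}|g|^2\phi^{s-2}$, where $|g|^2$ contains $|\partial f|^4$, $H^2(f)$ and $f_0^2$; it is not a priori a power of $F(u)u^{-1}$, so H\"older against the hypothesis cannot be applied to it directly. Reducing the $|\partial f|^2$ and $|\partial f|^4$ contributions to integrals of $H^n$ and $H^{n+1}$ is the content of the paper's Lemmas 3.1 and 5.1: one integrates by parts against the equation $f_{\alpha\bar\alpha}=-ng$, uses the lower bound $H'(f)\ge(n\kappa-n)H(f)>(2-n)H(f)$ (this is exactly where the restriction $\kappa>\tfrac{2}{n}$ is consumed), and uses the decay $u\to0$ at infinity to get $H(f)\ge Ce^{2f}$ on the annulus. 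Your proposal does not identify any of these intermediate estimates, and they are the technical heart of the argument. Finally, a smaller point: the rigidity step does not conclude CR-flatness directly from $\mathcal{M}\equiv0$; it first extracts $H'(f)=2H(f)$ from the vanishing of one of the squared terms (using that $f$ is nonconstant since $-\Delta_b f>0$), hence $F(u)=Cu^{1+2/n}$, and only then invokes the critical-case classification of Catino--Monticelli--Roncoroni--Wang on Sasakian manifolds to obtain the isometry with $\mathbb{H}^n$ and the explicit form of $u$.
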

      We recall the following volume comparison theorem on Sasakian manifolds (cf. \cite{LL}).
         \begin{theorem}
             Let $(M^{ 2n+1,} HM, J,\theta)$ be a complete noncompact Sasakian manifold.
             \\
             (1) If $n=1$ and the Tanaka-Webster scalar curvature is nonnegative, then
             \begin{align*}
                 V(R)\leq C R^{4}, \quad \forall R>1,
             \end{align*}
             for some $C>0.$
             \\
             (2) If $n\geq 2$ and the Tanaka-Webster curvature satisfies
             \begin{align}
                 Ric_b(X,X)\geq R(X,JX,JX,X)\geq 0,\quad \forall X\in HM,
             \end{align}
             then 
             \begin{align*}
               V(R)\leq C R^{2n+2},\quad \forall R>1
           \end{align*}
           for some $C > 0$.
         \end{theorem}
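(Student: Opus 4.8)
The plan is to reduce the volume bound to a sub-Laplacian comparison for the distance function and then integrate, mirroring the Bishop--Gromov argument from Riemannian geometry but adapted to the sub-Riemannian structure. Fix a base point $p$ and set $r(x)=d_{CC}(p,x)$, the Carnot--Carath\'eodory distance, so that $r$ satisfies the eikonal equation $|\nabla_b r|=1$ almost everywhere, where $\nabla_b$ denotes the horizontal gradient. The essential point is that the correct comparison model is the Heisenberg group $\mathbb{H}^n$, whose balls satisfy $V(R)\asymp R^{Q}$ with $Q=2n+2$; hence the target exponent $2n+2$ is the homogeneous dimension rather than the topological dimension $2n+1$, the discrepancy coming from the quadratic scaling of the Reeb direction.

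The analytic heart is a Riccati-type comparison for $m(r):=\Delta_b r$ along horizontal geodesics. I would apply the CR Bochner formula to $r$; on a Sasakian manifold the pseudo-Hermitian torsion vanishes, which removes the torsion terms and leaves an identity of the schematic form
\begin{align*}
0=\tfrac12\Delta_b|\nabla_b r|^2=|(\nabla^H)^2 r|^2+\langle\nabla_b r,\nabla_b\Delta_b r\rangle+Ric_b(\nabla_b r,\nabla_b r)+(\text{Reeb coupling terms}).
\end{align*}
Along a unit-speed horizontal geodesic the middle term is $\tfrac{d}{dr}m(r)$, and Cauchy--Schwarz bounds $|(\nabla^H)^2 r|^2$ from below by a multiple of $m(r)^2$. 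The coupling between the horizontal Hessian and the Reeb derivative $Tr$, forced by the non-integrability $d\theta\neq 0$, is precisely where the holomorphic sectional term $R(\nabla_b r,J\nabla_b r,J\nabla_b r,\nabla_b r)$ enters. The hypothesis $Ric_b(X,X)\ge R(X,JX,JX,X)\ge 0$ is tailored so that, after absorbing this coupling, the curvature contribution is nonnegative and one obtains a differential inequality of the form $m'(r)+c\,m(r)^2\le 0$, hence a model bound $\Delta_b r\le C/r$ with the constant governed by the homogeneous dimension $Q=2n+2$, wherever $r$ is smooth.

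With the Laplacian comparison in hand I would run the standard integrated argument: writing the volume density of the Carath\'eodory spheres in sub-Riemannian polar coordinates, the inequality $\Delta_b r\le C/r$ forces the density to grow no faster than $r^{Q-1}$, so by the coarea formula $R\mapsto V(R)/R^{Q}$ is essentially non-increasing and $V(R)\le C R^{2n+2}$ for large $R$. The two cases differ only in how the curvature hypothesis is met. When $n=1$ the horizontal bundle is two-dimensional, spanned by $\{X,JX\}$, so $Ric_b(X,X)=R(X,JX,JX,X)$ is automatic and both are proportional to the Tanaka--Webster scalar curvature; nonnegativity of the latter already supplies the required bound, which is why part (1) assumes only nonnegative scalar curvature. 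When $n\ge 2$ the Ricci curvature no longer controls the holomorphic sectional term on its own, so the pointwise domination $Ric_b\ge R(\cdot,J\cdot,J\cdot,\cdot)\ge 0$ of part (2) must be imposed separately.

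The main obstacle is making the comparison rigorous despite the failure of smoothness of $r$: the Carath\'eodory distance is singular on the cut locus, and in the sub-Riemannian setting one must also rule out interference from abnormal geodesics and handle the extra Reeb-direction terms in the Hessian that have no Riemannian analogue. I would treat the singular set by interpreting the comparison for $\Delta_b r$ in the barrier/distributional sense, so that it holds globally as a measure inequality, and exploit that contact manifolds are fat, so that minimizing geodesics are normal and the canonical sub-Riemannian Jacobi/Riccati framework applies. Controlling the sign of the accumulated Reeb-coupling terms along geodesics, and thereby extracting precisely the homogeneous-dimension exponent in the Riccati comparison rather than the topological one, is the delicate step; everything downstream is the usual Bishop--Gromov integration, and the full details are carried out in \cite{LL}.
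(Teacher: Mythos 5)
The first thing to note is that the paper does not prove this statement at all: Theorem~1.4 is explicitly \emph{recalled} from \cite{LL}, so there is no in-paper argument to compare yours against. Your proposal must therefore be judged on its own terms, and on those terms it is an outline rather than a proof --- you yourself defer ``the full details'' to \cite{LL} exactly at the point where the real work happens. The framing is reasonable: the correct model is $\mathbb{H}^n$ with homogeneous dimension $Q=2n+2$, and your explanation of why part (1) needs only nonnegative Tanaka--Webster scalar curvature (for $n=1$ there is a single independent curvature component, so $Ric_b(X,X)$ and $R(X,JX,JX,X)$ are proportional and the hypothesis of part (2) collapses) is correct. But the central analytic step is not merely ``delicate''; as written it would fail.

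Concretely, if you run the CR Bochner formula on $r$ and apply Cauchy--Schwarz to the horizontal Hessian, the trace inequality gives $|(\nabla^H)^2 r|^2 \ge (\Delta_b r)^2/(2n)$, and the resulting Riccati inequality yields a comparison governed by the \emph{topological} horizontal rank $2n$, not by the homogeneous dimension $Q-1=2n+1$ that the volume bound $V(R)\le CR^{2n+2}$ requires. Moreover, the CR Bochner identity carries terms in $r_0=\xi(r)$ and $\langle J\nabla_b r,\nabla_b r_0\rangle$ that are not signed and are not controlled by $Ric_b$ alone; this is the well-known obstruction in CR analysis, and it is precisely the quadratic behaviour in the Reeb direction that produces the extra power of $R$. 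The argument in \cite{LL} handles this through the canonical Jacobi-curve and matrix Riccati comparison along sub-Riemannian geodesics, where the model comparison functions depend on both $r$ and the vertical component of the initial covector, and the clean power law emerges only after integrating over the cotangent fibre --- not from a pointwise bound $\Delta_b r\le C/r$ in the barrier sense. So your plan names the right model and the right hypotheses, but the step ``absorb the Reeb coupling and obtain $m'+cm^2\le 0$ with constant governed by $Q$'' is the entire theorem, and the Bochner-plus-Cauchy--Schwarz route you describe does not deliver it. Since the paper itself only cites \cite{LL}, deferring to that reference is acceptable here; but the surrounding sketch should not suggest that the Riemannian Bishop--Gromov argument transfers with only cosmetic changes.
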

        From Theorem 1.4, we have the following corollary.
        \begin{corollary}
            Let $(M^{ 2n+1}, HM, J,\theta),\ n\geq 2$ be a complete noncompact Sasakian manifold with curvature condition (1.12). Replacing the assumption on $Ric_b$ and on $Vol(B_R)$ by (1.12) in Theorem 1.2 or Theorem 1.3, then $(M^{2n+1},HM,J,\theta) $ is isometric to $\mathbb{H}^n$ with its standard structures and $u$ is the form of (1.3).
        \end{corollary}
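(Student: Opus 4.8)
The plan is to observe that the single curvature hypothesis (1.12) simultaneously encodes both geometric assumptions required by Theorems 1.2 and 1.3, so that the corollary reduces to verifying these two implications and then invoking the appropriate theorem according to the dimension.

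First I would extract the pointwise Ricci bound from (1.12). The hypothesis reads $Ric_b(X,X)\geq R(X,JX,JX,X)\geq 0$ for all $X\in HM$; the second inequality makes the curvature quantity $R(X,JX,JX,X)$ nonnegative, and combining it with the first inequality yields $Ric_b(X,X)\geq 0$ for every horizontal $X$. Thus (1.12) implies $Ric_b\geq 0$, which is precisely the curvature assumption appearing in both Theorem 1.2 (the case $n=2$) and Theorem 1.3 (the case $n\geq 3$).

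Next I would produce the volume growth. Since $M$ is a complete noncompact Sasakian manifold of dimension $2n+1$ with $n\geq 2$ satisfying (1.12), part (2) of the volume comparison Theorem 1.4 applies directly and gives $V(R)\leq CR^{2n+2}$ for all $R>1$ and some $C>0$. For $n=2$ this is exactly the bound $V(R)\leq CR^{6}$ hypothesized in Theorem 1.2, and for $n\geq 3$ it is exactly the bound $V(R)\leq CR^{2n+2}$ hypothesized in Theorem 1.3.

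With both geometric hypotheses established, I would split into the two cases. When $n=2$, $F$ is assumed subcritical with exponent $2$ and to satisfy (1.8) and (1.9), so all hypotheses of Theorem 1.2 are met and its conclusion applies. When $n\geq 3$, $F$ is subcritical with exponent $1+\frac{2}{n}$ and satisfies (1.10) and (1.11), so all hypotheses of Theorem 1.3 are met and its conclusion applies; in both cases $(M^{2n+1},J,\theta)$ is isometric to $\mathbb{H}^n$ with its standard structures and $u$ has the form (1.3). The only point requiring attention --- and the nearest thing to an obstacle --- is the bookkeeping that the analytic assumptions on $F$ (subcriticality, the monotonicity inequalities, and the integral bounds (1.9) or (1.11)) are unaffected by the corollary and are inherited verbatim from the cited theorems; once $Ric_b\geq 0$ and the stated volume growth have been read off from (1.12), no further argument is needed.
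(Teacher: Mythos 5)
Your proposal is correct and is exactly the argument the paper intends: the paper derives the corollary directly from Theorem 1.4, noting that (1.12) gives $Ric_b\geq 0$ pointwise and, via Theorem 1.4(2), the volume bound $V(R)\leq CR^{2n+2}$, after which Theorem 1.2 (for $n=2$) or Theorem 1.3 (for $n\geq 3$) applies verbatim. No further comment is needed.
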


        ~\\
         $\mathbf{Notation}$ Throughout the paper, we adopt Einstein summation convention over repeated indices. Since the constant $C$ is not important, it may vary at different occurrences.

         \section{Preliminaries}
         In this section, we first give a brief introduction to pseudo-Hermitian geometry.  We refer to \cite{DT,We,Ta} for a complete overview of the material presented here. A smooth manifold $M$ of real dimension $(2n + 1)$ is said to be a CR manifold if there exists a rank $n$ complex subbundle $H^{1,0}M $ of $TM\otimes \mathbb{C}$ satisfying \begin{equation}
         H^{1,0}M\cap H^{0,1}M=\{0\} ,\ \  [\Gamma(H^{1,0}M),\Gamma(H^{1,0}M)]\subseteq \Gamma(H^{1,0}M)  , 
         \end{equation}
         where $H^{0,1}M=\overline{H^{1,0}M}$. Equivalently, the CR structure may also be described by $(HM,J)$, where $HM=Re\{H^{1,0}M\oplus H^{0,1}M\}$ and $J$ is an almost complex structure on $HM$ such that
         \begin{align*}
             J(X+\overline{X})=\sqrt{-1}(X-\overline{X}),\quad \forall X\in H^{1,0}M.
         \end{align*}
         Since $HM$ is naturally oriented by $J$, $M$ is orientable if and only if there exists a global nowhere vanishing 1-form $\theta$ such that $ker\theta=HM$. Any such $\theta$ is called a pseudo-Hermitian structure on $M$. The Levi form $L_\theta$ of $\theta$ is defined by
          \begin{eqnarray}
            L_\theta(X,Y)=d\theta(X,JY) \nonumber
          \end{eqnarray}
         for any $X,Y\in HM$. If the Levi form $L_\theta$ is positive definite for some choice of $\theta$, we call such $(M,HM,J,\theta) $ a pseudo-Hermitian manifold. Noting that $\theta$ is a contact form, thus there exists a unique Reeb vector field $\xi$ satisfying
         \begin{eqnarray}
           \theta(\xi)=1,\ d\theta(\xi,\cdot)=0.
         \end{eqnarray}
         It leads a decomposition of the tangent bundle $TM=HM \oplus R\xi$, which induces a natural projection $\pi_b:TM\rightarrow HM$. This yields a natural Riemannian metric $g_\theta$, called the Webster metric,
         \begin{eqnarray}
         g_\theta=G_\theta+\theta\otimes \theta ,\nonumber
        \end{eqnarray}
        where $ G_\theta=d\theta(\cdot,J\cdot)$. The volume form of $g_\theta$ is defined by $dV_\theta=\theta\wedge(d\theta)^{n}/n!$, and is often omitted when we integrate. 
        \par
        On a pseudo-Hermitian manifold $(M^{2n+1}, HM, J, \theta)$, a Lipschitz curve $\gamma:[0, l] \rightarrow  M$ is called horizontal if $\gamma^{'}\in H_{\gamma(t)}M$ a.e. in $[0, l]$. By the well-known work of Chow \cite{Ch}, there always exists a horizontal curve joining two points $p, q\in M. $ The Carnot-Carath$\acute{\mathrm{e}}$odory distance is defined as
        \begin{eqnarray}
              d(p,q)= inf\left\{ \int_0^l \sqrt{g_\theta(\gamma^{'},\gamma^{'})} dt \ | \  \gamma \in \Gamma(p,q) \right\},
            \nonumber
           \end{eqnarray}
        where $\Gamma(p,q) $ denotes the set of all horizontal curves joining $p$ and $q$. Clearly, Carnot-Carath$\acute{\mathrm{e}}$odory is indeed a distance function and induces the same topology on $M$ (cf. \cite{ABB,St}).  
        \par
         It is known that there exists a canonical connection $\nabla$ on a pseudo-Hermitian manifold, called the Tanaka-Webster connection, preserving the horizontal bundle, the CR structure and the Webster metric. Moreover, its torsion $T_\nabla$ satisfies
        \begin{align*}
            T_\nabla(X,Y)=2d\theta(X,Y)\xi\ and\ T_\nabla(\xi,JX)+JT_\nabla(\xi,X)=0
        \end{align*}
        for any $X,Y\in HM.$ The pseudo-Hermitian torsion $\tau$ of $\nabla$ is defined by $ \tau(X)=T_\nabla(\xi,X)$ for any $X\in TM.$ We say that $M$ is Sasakian if $\tau=0$. In this paper, we only consider Sasakian manifolds.
        \par
        Assume that $\{\eta_\alpha\}_{\alpha=1}^n$ is a local unitary frame field of $H^{1,0}M$ and let $\{ \theta^\alpha\}_{\alpha=1}^n$ be the dual coframe. Then
        \begin{align*}
            d\theta=2\sqrt{-1} \theta^\alpha\wedge \theta^{\Bar{\alpha}}.
        \end{align*}
        Given a smooth function $f$ on the Sasakian manifold $(M^{2n+1}, HM, J, \theta)$, its differential $df$ and covariant derivatives $ \nabla df$ can be expressed as
        \begin{align*}
            df=&f_0\theta+f_\alpha\theta^\alpha+f_{\bar{\alpha}}\theta^{\bar{\alpha}},
            \\
            \nabla d f=& f_{\alpha\beta}\theta^\alpha\otimes\theta^\beta +f_{\alpha\bar{\beta}}\theta^\alpha \otimes \theta^{\bar{\beta}}+ f_{\bar{\alpha}\beta}\theta^{\bar{\alpha}}\otimes \theta^\beta+f_{\bar{\alpha}\bar{\beta}}\theta^{\bar{\alpha}}\otimes \theta^{\bar{\beta}}
            \\       &+f_{0\alpha}\theta\otimes\theta^\alpha+f_{0\bar{\alpha}}\theta\otimes\theta^{\bar{\alpha}}+f_{\alpha0}\theta^\alpha\otimes\theta+f_{\bar{\alpha}0}\theta^{\bar{\alpha}}\otimes \theta ,\nonumber
        \end{align*}
        where 
        \begin{align*}
         f_0=\xi(f),\ f_\alpha=\eta_\alpha(f),\ f_{\bar{\alpha}}=\eta_{\bar{\alpha}}(f),\ f_{\alpha\beta}=\eta_\beta (\eta_\alpha f)-(\nabla_{\eta_\beta}\eta_\alpha) f,\ f_{0\alpha}=\eta_\alpha (\xi f),
        \end{align*}
         and so on. Then the horizontal gradient of $f$ is given by
        \begin{align*}
            \nabla_bf=f_\alpha \eta_{\bar{\alpha}}+f_{\bar{\alpha}}f_\alpha.
        \end{align*}
        Since $\tau=0$, we have (cf. \cite{DT})
        \begin{align*}
            &f_{\alpha\beta}-f_{\beta\alpha}=0,\quad  f_{\alpha\bar{\beta}}-f_{\bar{\beta}\alpha}=2\sqrt{-1}f_0\delta_\alpha^\beta,
            \\
            &f_{0\alpha}-f_{\alpha 0}=0,\quad f_{\alpha\beta0}-f_{\alpha0\beta}=f_{\alpha\bar{\beta}0}-f_{\alpha0\bar{\beta}}=0,
            \\
            &f_{\alpha\beta\bar{\gamma}}-f_{\alpha\bar{\gamma}\beta}=2\sqrt{-1}\delta_\gamma^\beta f_{\alpha0}+R_{\bar{\mu}\alpha\beta\bar{\gamma}}f_\mu,\quad f_{\alpha\bar{\beta}\bar{\gamma}}-f_{\alpha\bar{\gamma}\bar{\beta}}=0,
        \end{align*}
        where $R_{\bar{\mu}\alpha\beta\bar{\gamma}} $ are the components of curvature of the Tanaka-Webster connection. Moreover, $R_{\alpha\bar{\beta}} = R_{\bar{\mu}\alpha\mu\bar{\beta}}$ are the components of the pseudo-Hermitian Ricci curvature $Ric_b$ and $R=R_{\alpha\bar{\alpha}}$ is the Tanaka-Webster scalar curvature. Finally, we define the horizontal energy density by
        \begin{align*}
            |\nabla_b f|^2=2|\partial f |^2=2f_\alpha f_{\bar{\alpha}}
        \end{align*}
         and the sub-Laplacian of $f$ by
        \begin{align*}
            \Delta_b f=f_{\alpha\bar{\alpha}}+f_{\bar{\alpha}\alpha}.
        \end{align*}
        \par
         Next we are aim to give the Jerison-Lee type inequality for the positive solution $u$ of (1.1). Let $e^f=u^{\frac{1}{n}}$, then we have
        \begin{align*}
            -\Delta_b f= 2n|\partial f|^2+2n\frac{F(u)}{u}=2n|\partial f|^2+2n\frac{F(e^{nf})}{e^{nf}}.
        \end{align*}
        Let $H(t)=\frac{F(e^{nt})}{e^{nt}}$, then $0<H\in C^{1} $. Thus, we obtain
        \begin{align}
            -\Delta_b f= 2n|\partial f|^2+2nH(f).
        \end{align}
       Assuming that $F$ is a subcritical function with exponent $1+\frac{2}{n}$, we have
        \begin{align}
            2H-H^{'}\geq 0.
        \end{align}
         As done in \cite{JL,MO}, we define the following
        \begin{align}
            &D_{\alpha\beta}=f_{\alpha\beta}-2f_\alpha f_\beta, \quad D_\alpha=D_{\alpha\beta}f_{\bar{\beta}},\nonumber
            \\
            &E_{\alpha\bar{\beta}}=f_{\alpha\bar{\beta}}-\frac{1}{n}f_{\gamma\bar{\gamma}}\delta_\alpha^\beta,\quad E_\alpha=E_{\alpha\bar{\beta}}f_\beta,
            \\
            &G_\alpha=\sqrt{-1}f_{0\alpha}-\sqrt{-1}f_0f_\alpha+H(f)f_\alpha+|\partial f|^2f_\alpha ,\nonumber
            \\
            &g=|\partial f|^2+H(f)-\sqrt{-1}f_0. \nonumber
        \end{align}
        Then the equation (2.3) can be written as 
        \begin{align}
            f_{\alpha\bar{\alpha}}=-ng.
        \end{align}
        A direct calculation shows that
        \begin{align}            &E_{\alpha\bar{\beta}}=f_{\alpha\bar{\beta}}+g\delta_\alpha^\beta,\quad E_\alpha=f_{\alpha\bar{\beta}}f_\beta+gf_\alpha,
            \nonumber \\
            &D_\alpha=f_{\alpha\bar{\beta}}f_\beta-2|\partial f|^2f_\alpha, \quad G_\alpha=\sqrt{-1}f_{0\alpha}+gf_\alpha.
        \end{align}
        Moreover, we obtain
        \begin{align}
            |\partial f |_{\bar{\alpha}}^2=& D_{\bar{\alpha}}+E_{\bar{\alpha}}+\bar{g}f_{\bar{\alpha}}-2f_{\bar{\alpha}}H(f),
            \nonumber \\
            g_{\bar{\alpha}}=& D_{\bar{\alpha}}+E_{\bar{\alpha}}+G_{\bar{\alpha}}+(H^{'}(f)-2H(f))f_{\bar{\alpha}},
            \\
            \bar{g}_{\bar{\alpha}}=&D_{\bar{\alpha}}+E_{\bar{\alpha}}-G_{\bar{\alpha}}+2\bar{g}f_{\bar{\alpha}}+(H^{'}(f)-2H(f))f_{{\bar{\alpha}}},\nonumber
            \\
            g_0=&\sqrt{-1}f_{\alpha}G_{\bar{\alpha}}-\sqrt{-1}f_{\bar{\alpha}}G_\alpha+2f_0|\partial f|^2+H^{'}(f)f_0-\sqrt{-1}f_{00}. \nonumber
        \end{align}
         In view of the above observations, we give the following inequality (\cite{Zbq}).
         \begin{lemma}
            Let $(M^{ 2n+1}, HM, J,\theta)$ be a complete noncompact Sasakian manifold with $Ric_b\geq 0 $. Set 
            \begin{align}
                \mathcal{M}=&Re\ \nabla_{\eta_{\bar{\alpha}}}\{e^{2(n-1)f}[(g+3\sqrt{-1}f_0)E_\alpha+(g-\sqrt{-1}f_0)D_\alpha-3\sqrt{-1}f_0G_\alpha  \},
            \end{align}
            then
            \begin{align}
                \mathcal{M}\geq &(|D_{\alpha\beta}|^2+|E_{\alpha\bar{\beta}}|^2)e^{2(n-1)f}H(f)+(|G_\alpha|^2+|D_{\alpha\beta}f_{\bar{\gamma}}+E_{\alpha\bar{\gamma}}f_\beta |^2)e^{2(n-1)f} \nonumber
                \\
                &+s_0(|D_\alpha+G_\alpha|^2+|E_\alpha-G_\alpha|^2)e^{2(n-1)f} \nonumber
                \\
                &+e^{2(n-1)f}|\sqrt{1-s_0}(D_\alpha+G_\alpha)+\frac{1}{2\sqrt{1-s_0}}f_\alpha(H^{'}(f)-2H(f))|^2 \nonumber
                \\
                &+e^{2(n-1)f}|\sqrt{1-s_0}(E_\alpha-G_\alpha)+\frac{1}{2\sqrt{1-s_0}}f_\alpha(H^{'}(f)-2H(f))|^2 \nonumber
                \\
                &-(H^{'}(f)-2H(f))e^{2(n-1)f}|\partial f|^2 \left ((2n-1)H(f)+\frac{H^{'}(f)-2H(f)}{2(1-s_0)}\right) \nonumber
                \\
                &- (2n-1)(H^{'}(f)-2H(f))e^{2(n-1)f}|\partial f|^4 
                -3n (H^{'}(f)-2H(f))e^{2(n-1)f}f_0^2,
            \end{align}
            where $0<s_0<1$ is a constant.
        \end{lemma}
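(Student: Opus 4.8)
The plan is to establish (2.9) by computing the divergence $\mathcal{M}$ exactly via the Leibniz rule and the commutation formulas of Section~2, and then reorganizing the outcome into a sum of squares plus the nonlinear remainder terms, using $Ric_b\ge 0$ to discard the curvature contribution. This is the Sasakian analogue of the Jerison--Lee differential identity, so the coefficients $g+3\sqrt{-1}f_0$, $g-\sqrt{-1}f_0$ and $-3\sqrt{-1}f_0$ appearing in (2.8) are not free: they are precisely the weights that force the first-order cross terms to assemble into complete squares.

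First I would factor out the conformal weight. Writing the bracket in (2.8) as $V_\alpha$, so that the field is $e^{2(n-1)f}V_\alpha$, the Leibniz rule gives
\begin{align*}
    \nabla_{\eta_{\bar{\alpha}}}\left(e^{2(n-1)f}V_\alpha\right)=e^{2(n-1)f}\left(\nabla_{\eta_{\bar{\alpha}}}V_\alpha+2(n-1)f_{\bar{\alpha}}V_\alpha\right),
\end{align*}
so that every term inherits the common factor $e^{2(n-1)f}$ displayed in (2.9), and the weight derivative $2(n-1)f_{\bar{\alpha}}V_\alpha$ supplies additional $|\partial f|^4$ and $f_0^2$ contributions that later merge with those coming from $\nabla_{\eta_{\bar{\alpha}}}V_\alpha$.

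The core step is to expand $\nabla_{\eta_{\bar{\alpha}}}V_\alpha$. Here I would differentiate the three products $(g+3\sqrt{-1}f_0)E_\alpha$, $(g-\sqrt{-1}f_0)D_\alpha$ and $-3\sqrt{-1}f_0 G_\alpha$, using the identities (2.7) for the scalar derivatives $g_{\bar{\alpha}}$, $\bar{g}_{\bar{\alpha}}$, $g_0$ and $|\partial f|^2_{\bar{\alpha}}$, and reducing the divergences $\nabla_{\eta_{\bar{\alpha}}}E_\alpha$, $\nabla_{\eta_{\bar{\alpha}}}D_\alpha$, $\nabla_{\eta_{\bar{\alpha}}}G_\alpha$ to the basic tensors through (2.5), (2.6) and the third-order commutation relations of Section~2, in particular $f_{\alpha\beta\bar{\gamma}}-f_{\alpha\bar{\gamma}\beta}=2\sqrt{-1}\delta_\gamma^\beta f_{\alpha 0}+R_{\bar{\mu}\alpha\beta\bar{\gamma}}f_\mu$ and $f_{\alpha\bar{\beta}}-f_{\bar{\beta}\alpha}=2\sqrt{-1}f_0\delta_\alpha^\beta$. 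The vanishing of the pseudo-Hermitian torsion $\tau=0$ is exactly what keeps these relations in their stated simple form, with no extra torsion terms to carry. After this reduction the only curvature left is the Ricci contraction $R_{\alpha\bar{\beta}}f_\alpha f_{\bar{\beta}}$, which I bound below by $0$ using $Ric_b\ge 0$ and discard. Taking the real part $\mathrm{Re}$ as in (2.8) then cancels the purely imaginary contributions.

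The final and most delicate step is the algebraic reorganization, which is an exact rewriting: once the Ricci term has been dropped, every remaining term is regrouped into the right-hand side of (2.9) with no further inequality. The manifestly positive pieces $(|D_{\alpha\beta}|^2+|E_{\alpha\bar{\beta}}|^2)H(f)$, $|G_\alpha|^2$ and $|D_{\alpha\beta}f_{\bar{\gamma}}+E_{\alpha\bar{\gamma}}f_\beta|^2$ emerge once the relevant contractions are recognized as squared norms. The remaining first-order quadratic form is $|D_\alpha+G_\alpha|^2+|E_\alpha-G_\alpha|^2$, which I would split as $s_0(\cdots)+(1-s_0)(\cdots)$; the $(1-s_0)$ portion is then completed against the cross terms carrying $f_\alpha(H'(f)-2H(f))$ into the two squares $|\sqrt{1-s_0}(D_\alpha+G_\alpha)+\tfrac{1}{2\sqrt{1-s_0}}f_\alpha(H'(f)-2H(f))|^2$ and its $(E_\alpha-G_\alpha)$ analogue, the exact leftovers of this completion being the three remainder terms on the last lines of (2.9), with coefficients $(2n-1)$, $\tfrac{1}{2(1-s_0)}$ and $3n$. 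The main obstacle is entirely this bookkeeping: one must confirm that the coefficients in (2.8) are precisely those for which all first-order cross terms assemble into the stated squares, and that the leftover terms combine with the weight-derivative contribution into exactly these three remainders. The sign hypothesis (2.2), namely $2H-H'\ge 0$, is what will later render these remainders controllable in the integral estimates.
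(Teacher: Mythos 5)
The first thing to say is that the paper itself contains no proof of this lemma: immediately after the statement it says ``This is exactly Lemma 2.1 in \cite{Zbq} for $p=0$'' and imports the inequality wholesale. Your outline is the same route that the cited source (and, ultimately, Jerison--Lee) follows --- expand the divergence of $e^{2(n-1)f}V_\alpha$ by the Leibniz rule and the commutation formulas, discard the curvature contribution using $Ric_b\ge 0$, and close the first-order cross terms into squares via the splitting $1=s_0+(1-s_0)$ --- so there is no divergence of method to report.

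The difficulty is that what you have written is a plan rather than a proof, and for a statement of this kind the entire mathematical content sits in the step you defer. You must actually compute $\nabla_{\eta_{\bar{\alpha}}}E_\alpha$, $\nabla_{\eta_{\bar{\alpha}}}D_\alpha$, $\nabla_{\eta_{\bar{\alpha}}}G_\alpha$ from (2.6)--(2.8) and the third-order commutators, track every cross term, and verify that the weights $g+3\sqrt{-1}f_0$, $g-\sqrt{-1}f_0$, $-3\sqrt{-1}f_0$ make everything close up into exactly the squares displayed in (2.10), with exactly the three remainders on the last two lines (coefficients $2n-1$, $\tfrac{1}{2(1-s_0)}$, $3n$). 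Your justification for this --- that these coefficients ``are precisely the weights that force the first-order cross terms to assemble into complete squares'' --- is the conclusion restated, not an argument for it; you even label the verification the ``main obstacle'' and then do not carry it out. Two subsidiary claims also need checking rather than assertion: (i) that after all contractions the only surviving curvature is a Ricci term of favorable sign (the commutator $f_{\alpha\beta\bar{\gamma}}-f_{\alpha\bar{\gamma}\beta}$ introduces the full tensor $R_{\bar{\mu}\alpha\beta\bar{\gamma}}$, and one must use the Sasakian symmetries to reduce its contractions to $R_{\alpha\bar{\beta}}f_\alpha f_{\bar{\beta}}$ before $Ric_b\ge 0$ can be invoked); and (ii) that the weight term $2(n-1)f_{\bar{\alpha}}V_\alpha$ combines with the divergence terms to produce the coefficients $2n-1$ and $3n$ and nothing else. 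As it stands the proposal is an accurate summary of the known proof, but it does not constitute one; to make it complete you would either have to perform the computation or do what the author does and cite Lemma 2.1 of \cite{Zbq} explicitly.
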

        This is exactly Lemma 2.1 in \cite{Zbq} for $p=0$. As shown in \cite{Zbq}, if there exists a constant $\kappa\in (\frac{4}{n}-3,1+\frac{2}{n}]$ such that 
       \begin{align*}
           tF^{'}(t)-\kappa F(t)\geq 0,\quad \forall t>0,
        \end{align*}
        then we have
       \begin{align}
           H^{'}(f)\geq(n\kappa-n)H(f)\geq 2(2-2n)H(f).
       \end{align}  
       This implies that
       \begin{align*}
           \left((2n-1)H(f)+\frac{H^{'}(f)-2H(f)}{2(1-s_0)}\right)\geq \left(2n-1+\frac{n\kappa-n-2}{2(1-s_0)}\right)H(f)\geq 0
       \end{align*}
        for some $0<s_0< min\{1, \frac{n\kappa-4+3n}{4n-2} \}$. In this case, all coefficients on the right side of (2.10) are positive. It is easy to see that we can find a suitable $s_0$ under the condition in Theorem 1.1, Theorem 1.2 and Theorem 1.3. In the sequel, we assume that $0<s_0< min\{1, \frac{n\kappa-4+3n}{4n-2} \}$, i.e., $H$ satisfies (2.11). In particular, we have $\mathcal{M}\geq 0$.
       \par
       Given $R>0$, there exists a cut-off function $\phi$ such that 
        $$  \left\{
        \begin{array}{rcl}
         &\phi=1    &  in\ B_R, \\
         &0\leq \phi\leq 1,    & in\ B_{2R} ,\\
         &\phi=0,   &  in\ M \backslash B_{2R},  \\
          & |\partial \phi|\leq \frac{C}{R},  & in\ M. 
        \end{array}
        \right.
        $$
        Let $A_R=B_{2R}\setminus B_{R}$. As in \cite{CLMR,CMRW}, we obtain a lemma in terms of the test functions.
       \begin{lemma}
           Let $s>2$ and $ \beta\geq 0$ small enough. Then we have the following estimates 
           \begin{align}
              &\int_M\Psi^{-\beta}\mathcal{M}\phi^s
               \nonumber\\
               \leq &C\left(\int_{A_R}  \Psi^{-\beta}\mathcal{M}\phi^s\right)^{\frac{1}{2}}
               \cdot \left(\frac{1}{R^2}\int_{A_R} e^{2(n-1)f}|g|^2\Psi^{-\beta}\phi^{s-2} \right)^{\frac{1}{2}}
           \end{align}
           and 
           \begin{align}
              \int_M\Psi^{-\beta}\mathcal{M}\phi^s\leq  \frac{C}{R^2}\int_{A_R} e^{2(n-1)f}|g|^2\Psi^{-\beta}\phi^{s-2},
           \end{align}
           where $ \Psi=|g|^2e^{-2f}$.
       \end{lemma}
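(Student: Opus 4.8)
The plan is to prove Lemma 2.6 (the integral estimate) by testing the pointwise inequality $\mathcal M\ge 0$ against the cutoff $\phi^s$ weighted by $\Psi^{-\beta}$, then moving all derivatives off $\mathcal M$ via integration by parts and controlling the resulting boundary/gradient terms with Cauchy–Schwarz. Since $\mathcal M$ is the real part of a horizontal divergence, $\mathcal M=\mathrm{Re}\,\nabla_{\eta_{\bar\alpha}}W_\alpha$ with $W_\alpha=e^{2(n-1)f}[(g+3\sqrt{-1}f_0)E_\alpha+(g-\sqrt{-1}f_0)D_\alpha-3\sqrt{-1}f_0G_\alpha]$, the integral $\int_M \Psi^{-\beta}\mathcal M\phi^s$ can be integrated by parts. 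The key observation is that the divergence structure forces the bulk term to vanish on $B_R$ (where $\phi\equiv 1$ so its derivatives are zero), leaving only contributions supported on the annulus $A_R$, which is exactly where the cutoff transitions.

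First I would compute $\int_M \Psi^{-\beta}\mathcal M\phi^s$ by writing $\mathcal M\phi^s$ as a divergence minus the terms where $\nabla_{\eta_{\bar\alpha}}$ hits $\Psi^{-\beta}\phi^s$. Integrating the divergence gives zero (the integrand is compactly supported in $B_{2R}$), so
\begin{align}
\int_M\Psi^{-\beta}\mathcal M\phi^s=-\mathrm{Re}\int_M W_\alpha\,\nabla_{\eta_{\bar\alpha}}\!\left(\Psi^{-\beta}\phi^s\right).\nonumber
\end{align}
Because $\phi\equiv 1$ on $B_R$, the factor $\nabla_{\eta_{\bar\alpha}}(\Psi^{-\beta}\phi^s)$ is supported on $A_R$ once I arrange the $\phi^s$-derivative to dominate; the $\Psi^{-\beta}$-derivative I would handle by choosing $\beta\ge 0$ \emph{small enough} so that the term $-\beta\Psi^{-\beta-1}(\nabla\Psi)\phi^s$ can be absorbed — this is precisely the role of the smallness hypothesis on $\beta$. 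The main quantitative input is that each $W_\alpha$ is bounded by $e^{2(n-1)f}|g|$ times the relevant second-order quantities $|E_\alpha|,|D_\alpha|,|G_\alpha|$, all of which are in turn controlled by $\mathcal M^{1/2}$ up to constants (this follows from the quadratic lower bound (2.10), since $\mathcal M$ dominates $|E_\alpha-G_\alpha|^2$, $|D_\alpha+G_\alpha|^2$, $|G_\alpha|^2$, etc.). Thus $|W_\alpha|\le C\,e^{(n-1)f}\,\mathcal M^{1/2}\,|g|$.

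Combining these, and using $|\nabla_{\eta_{\bar\alpha}}\phi^s|\le C s\,\phi^{s-1}|\partial\phi|\le \tfrac{C}{R}\phi^{s-1}$, I would obtain
\begin{align}
\int_M\Psi^{-\beta}\mathcal M\phi^s
\le C\int_{A_R}\Psi^{-\beta}\,e^{(n-1)f}\,\mathcal M^{1/2}\,|g|\,\tfrac{1}{R}\,\phi^{s-1},\nonumber
\end{align}
and then split $\phi^{s-1}=\phi^{s/2}\cdot\phi^{s/2-1}$ and apply Cauchy–Schwarz in $A_R$, grouping $\Psi^{-\beta/2}\mathcal M^{1/2}\phi^{s/2}$ against $\Psi^{-\beta/2}e^{(n-1)f}|g|\,R^{-1}\phi^{s/2-1}$. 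This yields exactly (2.12); estimate (2.13) then follows by squaring (2.12), dividing through by the common factor $\big(\int_{A_R}\Psi^{-\beta}\mathcal M\phi^s\big)^{1/2}$ (legitimate since the integrand is nonnegative and the factor is finite), and noting $\int_{A_R}\le\int_M$.

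The hard part will be the bookkeeping that produces $|W_\alpha|\le C e^{(n-1)f}\mathcal M^{1/2}|g|$ cleanly: the coefficients $g\pm\sqrt{-1}f_0$ and $3\sqrt{-1}f_0$ must be bounded by $|g|$ (note $|f_0|\le|g|$ and $|g+3\sqrt{-1}f_0|\le C|g|$ since $f_0=\mathrm{Im}\,g$ up to sign), and each of $E_\alpha,D_\alpha,G_\alpha$ must be expressed as a fixed linear combination of the quantities whose squares appear in (2.10). The term requiring care is the $(H'(f)-2H(f))$ contributions inside the completed squares in (2.10): since (2.2) gives $2H-H'\ge0$, i.e. $H'-2H\le0$, the cross terms there could in principle spoil the bound $|G_\alpha|\le C\mathcal M^{1/2}$, so I would use the triangle inequality together with the separately nonnegative term $|G_\alpha|^2 e^{2(n-1)f}$ appearing explicitly on the right of (2.10) to recover control of $|G_\alpha|$ directly, and similarly extract $|E_\alpha-G_\alpha|$ and $|D_\alpha+G_\alpha|$ from the $s_0$-weighted squares. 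The smallness of $\beta$ enters only to keep the $\Psi^{-\beta}$-derivative term subordinate; everything else is Cauchy–Schwarz and the nonnegativity of $\mathcal M$.
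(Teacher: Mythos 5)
Your proposal follows essentially the same route as the paper's proof: integrate the divergence identity for $\mathcal{M}$ against $\Psi^{-\beta}\phi^s$, bound the vector field by $C|g|\sqrt{\mathcal{M}}\,e^{(n-1)f}$ using the separately nonnegative squares on the right of (2.10), absorb the $\beta$-term into the left-hand side, and finish the cutoff term with Cauchy--Schwarz. The one step you assert rather than verify is that the $\beta\Psi^{-\beta-1}\nabla_b\Psi$ contribution really is absorbable: this needs the quantitative bound $|\Psi_{\bar{\alpha}}|\leq C|g|\sqrt{\mathcal{M}}\,e^{-(n+1)f}$, which the paper extracts from the formulas (2.8) for $g_{\bar{\alpha}}$ and $\bar{g}_{\bar{\alpha}}$ together with the two-sided control $4(1-n)H\leq H'\leq 2H$ (hence $|H'-2H|\leq CH$) --- without that structural estimate, no smallness of $\beta$ would make the absorption work.
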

       \begin{proof}
           By multiplying (2.9) by $\Psi^{-\beta}\phi^s$ and integrating by parts, we have
           \begin{align*}
              &\int_M\Psi^{-\beta}\mathcal{M}\phi^s
               \\
               =&\int_M \  Re \nabla_{\eta_{\bar{\alpha}}}\{e^{2(n-1)f}[(g+3\sqrt{-1}f_0)E_\alpha+(g-\sqrt{-1}f_0)D_\alpha-3\sqrt{-1}f_0G_\alpha ] \}\Psi^{-\beta}\phi^s
               \\
               =&\beta\int_M Re \{e^{2(n-1)f}[(g+3\sqrt{-1}f_0)E_\alpha+(g-\sqrt{-1}f_0)D_\alpha-3\sqrt{-1}f_0G_\alpha]\Psi_{\bar{\alpha}}  \}\Psi^{-\beta-1}\phi^s
               \\
               &-s\int_M Re\{e^{2(n-1)f}[(g+3\sqrt{-1}f_0)E_\alpha+(g-\sqrt{-1}f_0)D_\alpha-3\sqrt{-1}f_0G_\alpha]\phi_{\bar{\alpha}}  \}\Psi^{-\beta}\phi^{s-1}.
           \end{align*}
           From the definition of $g$, we obtain that
           \begin{align*}
               &| (g+3\sqrt{-1}f_0)E_\alpha+(g-\sqrt{-1}f_0)D_\alpha-3\sqrt{-1}f_0G_\alpha|
               \\
               \leq & |D_\alpha+E_\alpha|(|\partial f|^2+H(f))+|f_0|\cdot|2D_\alpha-2E_\alpha+3G_\alpha|.
           \end{align*}
           Since
           \begin{align*}
               &|D_\alpha+E_\alpha|\leq |D_\alpha+G_\alpha|+|E_\alpha-G_\alpha| \leq C\sqrt{\mathcal{M}}e^{-(n-1)f},
               \\
               &|2D_\alpha-2E_\alpha+3G_\alpha|\leq 2|D_\alpha+G_\alpha|+2|E_\alpha-G_\alpha|+|G_\alpha|\leq C\sqrt{\mathcal{M}}e^{-(n-1)f},
               \\
               &|\partial f|^2+H(f)\leq |g|,\quad |f_0|\leq |g|,
           \end{align*}
           we have
           \begin{align}
               | (g+3\sqrt{-1}f_0)E_\alpha+(g-\sqrt{-1}f_0)D_\alpha-3\sqrt{-1}f_0G_\alpha|\leq C|g|\sqrt{\mathcal{M}}e^{-(n-1)f}.
           \end{align}
           Next we compute $\Psi_{\bar{\alpha}}$. By (2.8), we have
           \begin{align*}
               \Psi_{\bar{\alpha}}=&( |g|^2e^{-2f})_{\bar{\alpha}}=e^{-2f}(g_{\bar{\alpha}}\bar{g}+g\bar{g}_{\bar{\alpha}})-2|g|^2e^{-2f}f_{\bar{\alpha}}
               \\
               =&e^{-2f}[\bar{g}(D_{\bar{\alpha}}+E_{\bar{\alpha}}+G_{\bar{\alpha}}+(H^{'}(f)-2H(f))f_{\bar{\alpha}} )
               \\
               &+g(D_{\bar{\alpha}}+E_{\bar{\alpha}}-G_{\bar{\alpha}}+2\bar{g}f_{\bar{\alpha}}+(H^{'}(f)-2H(f))f_{{\bar{\alpha}}} )]-2|g|^2e^{-2f}f_{\bar{\alpha}}
               \\
               =&2e^{-2f}[(D_{\bar{\alpha}}+G_{\bar{\alpha}})(|\partial f|^2+H(f))+(E_{\bar{\alpha}}-G_{\bar{\alpha}})(|\partial f|^2+H(f))+\sqrt{-1}f_0G_{\bar{\alpha}} ]
               \\
               &+2e^{-2f}(H^{'}(f)-2H(f))(|\partial f|^2+H(f))f_{{\bar{\alpha}}}.
           \end{align*}
           Noting that $ 4(1-n)H(f)\leq H^{'}(f)\leq 2H(f)$, we have
           \begin{align*}
               |H^{'}(f)-2H(f)|\leq CH(f).
           \end{align*}
           In particular, we find that 
           \begin{align*}
               |(H^{'}(f)-2H(f))(|\partial f|^2+H(f))f_{{\bar{\alpha}}}|\leq C|g|\sqrt{\mathcal{M}}e^{-(n-1)f}.
           \end{align*}
           Thus 
           \begin{align}
               |\Psi_{\bar{\alpha}}|\leq C|g|\sqrt{\mathcal{M}}e^{-(n+1)f}.
           \end{align}
           From (2.14) and (2.15), we obtain 
           \begin{align*}
               &\int_M\Psi^{-\beta}\mathcal{M}\phi^s 
               \\
               \leq& C\beta \int_M \mathcal{M}|g|^2e^{-2f}\Psi^{-\beta-1}\phi^s+Cs\int_M |g|\sqrt{\mathcal{M}}e^{(n-1)f}\Psi^{-\beta}|\partial \phi|\phi^{s-1}
               \\
               =&C\beta\int_M\Psi^{-\beta}\mathcal{M}\phi^s+Cs\int_M |g|\sqrt{\mathcal{M}}e^{(n-1)f}\Psi^{-\beta}|\partial \phi|\phi^{s-1}.
           \end{align*}
           Choosing $0\leq \beta<\frac{1}{C}$, we conclude that 
           \begin{align*}
               \int_M\Psi^{-\beta}\mathcal{M}\phi^s\leq& C\int_M |g|\sqrt{\mathcal{M}}e^{(n-1)f}\Psi^{-\beta}|\partial \phi|\phi^{s-1}
               \\
               \leq & C\left( \int_{A_R}\Psi^{-\beta}\mathcal{M}\phi^s \right)^{\frac{1}{2}} \cdot\left( \frac{1}{R^2}\int_{A_R}e^{2(n-1)f}|g|^2\Psi^{-\beta}\phi^{s-2}\right)^{\frac{1}{2}}.
            \end{align*}
             The inequality (2.13) is a direct consequence of (2.12).
       \end{proof}
       It is easy to see that we need to prove the following inequality
       \begin{align*}
           \int_{A_R}e^{2(n-1)f}|g|^2\Psi^{-\beta}\phi^{s-2}\leq C
       \end{align*}
      for $R>0$ large enough. Finally, we give a result which will be used in the proof of Theorem 1.1.
       \begin{corollary}(\cite{CMRW})
           Let $(M^{2n+1},\theta,J)$ be a $(2n+1)$-dimensional complete Sasakian manifold satisfying the curvature assumptions in Theorem 1.4. Let $u$ be a positive superharmonic function in $M$, i.e. $u \in C^2(M)$ and
           \begin{align*}
            \Delta_b u\leq 0.
            \end{align*}
            Then there exists a constant $C > 0$ such that
            \begin{align*}
                u(x)\geq \frac{C}{r(x)^{max\{3,n+1 \}}}
            \end{align*}
        for any $x \in M$ with $r(x)>1$ and $r(x)$ is the Carnot-Carath$\acute{e}$odory distance from $x$ to some fix piont.
       \end{corollary}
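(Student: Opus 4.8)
The plan is to reduce the statement to a pointwise lower bound in terms of $r(x)=d(o,x)$, to extract a \emph{non-sharp} polynomial bound from the volume control of Theorem 1.4 by standard heat-kernel/Harnack machinery, and then to upgrade the exponent to the sharp value $\max\{3,n+1\}$ using the Sasakian structure. First I would fix a base point $o$ and record that, since $u$ is positive and continuous, $m:=\min_{\overline{B}_1}u>0$; the whole problem is to bound $u(x)$ below. Note that the two target exponents, $3$ for $n=1$ and $n+1$ for $n\geq 2$, are exactly the two regimes of Theorem 1.4, which is the structural hint that the proof must split along the same dichotomy.

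For the baseline estimate I would use that $Ric_b\geq 0$ together with the Sasakian condition yields volume doubling for Carnot--Carath\'eodory balls, while $\Delta_b$ satisfies a scale-invariant subelliptic Poincar\'e inequality; by the Grigor'yan--Saloff-Coste--Sturm theory these two properties are equivalent to the parabolic Harnack inequality and hence to two-sided Li--Yau bounds for the heat kernel $p(t,x,y)$ of $\Delta_b$. Superharmonicity enters through the monotonicity $\frac{d}{dt}P_tu=P_t\Delta_b u\leq 0$, so $u(x)\geq P_tu(x)$ for every $t>0$. Restricting $P_tu(x)=\int_M p(t,x,y)u(y)$ to $B_1(o)$, using $u\geq m$ there and the Gaussian lower bound $p(t,x,y)\geq c\,V(x,\sqrt t)^{-1}\exp(-Cd(x,y)^2/t)$ with $t\sim r(x)^2$, gives $u(x)\geq c\,V(o,2r(x))^{-1}\geq c\,r(x)^{-D}$, where by Theorem 1.4 the volume-growth exponent is $D=4$ for $n=1$ and $D=2n+2$ for $n\geq2$. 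This already yields the qualitative conclusion $u(x)\geq c\,r(x)^{-N}$, but with a \emph{non-sharp} exponent.

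The crux is that the target $\max\{3,n+1\}$ is \emph{strictly smaller} than $D$ (slower decay, a stronger bound), so it cannot come from the volume bound alone and must use the finer Sasakian geometry. Following \cite{CMRW}, I would pass to $v=\log u$, which satisfies $\Delta_b v=\Delta_b u/u-|\nabla_b v|^2\leq -|\nabla_b v|^2$, and run a CR Bochner-type argument under $Ric_b\geq0$. The sharp dimensional inequality $|\nabla_b^2 v|^2\geq \frac{1}{2n}(\Delta_b v)^2\geq \frac{1}{2n}|\nabla_b v|^4$, combined with careful control of the Reeb-direction terms $v_0=\xi(v)$ special to the Sasakian Bochner formula, should produce a Yau-type gradient estimate $|\nabla_b\log u|(x)\leq C/r(x)$ with $C=\max\{3,n+1\}$. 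Integrating this bound along a Carnot--Carath\'eodory geodesic from a point of $\partial B_1$ to $x$ gives $\log u(x)\geq \log m-C\int_1^{r(x)}s^{-1}\,ds$, i.e. $u(x)\geq c\,r(x)^{-C}$, and the constant $C$ collapses to $3$ when $n=1$ and to $n+1$ when $n\geq2$, in parallel with Theorem 1.4 and the comparison geometry of \cite{LL}.

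The main obstacle is precisely step three: controlling the extra Reeb/torsion-free terms in the CR Bochner formula so that the gradient estimate lands on the \emph{exact} constant $\max\{3,n+1\}$ rather than a larger one (the naive argument produces only the weaker exponent $D$ or $D-2$). This is where the curvature condition (1.12) and the dimensional dichotomy are indispensable, and it is the heart of the computation in \cite{CMRW}. Two secondary technical points must also be handled: the distance function $r$ is only Lipschitz, so all differential inequalities must be read in the barrier/viscosity sense with Calabi's trick at the cut locus; and the identities $u\geq P_tu$ together with the two-sided heat-kernel bounds require stochastic completeness of $\Delta_b$, which itself follows from the volume growth guaranteed by Theorem 1.4.
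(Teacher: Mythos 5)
First, a point of comparison: the paper does not actually prove this statement --- Corollary 2.3 is imported verbatim from \cite{CMRW} with no argument given (and is invoked only once, for $n=1$, in estimate (3.9)) --- so your proposal can only be judged on its own terms. Your first step is sound in outline: superharmonicity plus a weak Harnack or Green's-function comparison for $\Delta_b$ gives $u(x)\geq c\,V(2r(x))^{-1}\int_{B_1}u$, hence a polynomial lower bound with exponent dictated by the volume growth. Two repairable caveats: Theorem 1.4 supplies only a volume \emph{upper} bound, not the doubling and Poincar\'e inequalities that your Grigor'yan--Saloff-Coste--Sturm reduction requires (these must be imported separately, e.g.\ from the Baudoin--Garofalo curvature-dimension theory for Sasakian manifolds), and applying $P_t$ to a possibly unbounded superharmonic $u$ needs a truncation argument.

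The genuine gap is your third step. A Cheng--Yau type estimate $|\nabla_b\log u|\leq C/r$ is a theorem about positive \emph{harmonic} functions (or functions with two-sided control of $\Delta_b u/u$); it fails for merely superharmonic $u$. Concretely, on $\mathbb{H}^n$ take $u=\Gamma*\phi+\Gamma(p^{-1}\,\cdot)*\phi$ with $\Gamma$ the Folland fundamental solution, $\phi\geq 0$ a bump function and $p$ far from the origin: $u$ is smooth, positive, $\Delta_b u\leq 0$, yet $|\nabla_b\log u|$ near $p$ is of order $1/\mathrm{dist}(\cdot,p)$ rather than $1/r$, so no such gradient estimate can hold and the integration along geodesics cannot be run; since you explicitly defer ``the heart of the computation'' to \cite{CMRW}, the decisive step is absent rather than proved. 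Moreover, no upgrade of the kind you envisage can exist: the single mollified fundamental solution $u=\Gamma*\phi$ is positive, smooth and superharmonic with $u\sim r^{2-Q}=r^{-2n}$ at infinity, so for $n\geq 2$ superharmonicity alone is compatible with decay strictly faster than $r^{-(n+1)}$, and any correct argument must stop at the Green's-function exponent. The natural proof is therefore the elementary one you dismiss as ``non-sharp'': compare $u\geq \epsilon\,G(o,\cdot)$ on $M\setminus B_1$ via the minimum principle and use the Li--Yau type bound $G(o,x)\geq c\int_{r(x)}^{\infty}t\,V(t)^{-1}dt\geq c\,r(x)^{2-Q}$; for $n=1$ this already gives $r^{-2}\geq r^{-3}$, which is all the present paper uses, and for $n\geq 2$ it gives $r^{-2n}$. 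You should also flag the exponent $\max\{3,n+1\}$ in the quoted statement for verification against \cite{CMRW}, since for $n\geq 2$ it is contradicted by the example above.
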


       \section{Proof of Theorem 1.1}
       Suppose that $(M^{2n+1},J,\theta)$ is a complete noncompact Sasakian manifold with $Ric_b \geq 0$. Let $f$, $g$, $H$, $\phi$ and $\mathcal{M}$ be defined as in Section 2. In this section, we assume that there exist constants $1+\frac{1}{n}<\kappa\leq 1+\frac{2}{n}$ such that 
          \begin{align}
              tF^{'}(t)-\kappa F(t)\geq 0,\ \forall t>0. \nonumber
          \end{align} 
          In particular, we have
          \begin{align*}
              H^{'}(f)\geq (n\kappa-n)H(f)>H(f).
          \end{align*}
          First, we give some integral estimates which are useful in the proof.
      \begin{lemma}
          Let $n\geq 1$ and $s\geq 4$. If $\beta\geq  0$ is small enough, there exist a constant $C$ and a small $\epsilon>0$ such that 
          \begin{align}
              \int_M\Psi^{-\beta}\mathcal{M}\phi^s\leq \frac{C}{R^4}\int_{A_R}e^{2(n-1)f}|\partial f|^2\Psi^{-\beta}\phi^{s-4}+\frac{C}{R^2}\int_{A_R}e^{2(n-1)f}|\partial f|^4\Psi^{-\beta}\phi^{s-2}
          \end{align}
          and 
          \begin{align}
              \int_{A_R}e^{2(n-1)f}|g|^2\Psi^{-\beta}\phi^{s-2}
              \leq& \frac{C}{R^2}\int_{A_R}e^{2(n-1)f}|\partial f|^2\Psi^{-\beta}\phi^{s-4}+\epsilon R^2 \int_M\Psi^{-\beta}\mathcal{M}\phi^s
              \nonumber\\
              &+C\int_{A_R}e^{2(n-1)f}|\partial f|^4\Psi^{-\beta}\phi^{s-2}
          \end{align}
          for any $R>0$ .
      \end{lemma}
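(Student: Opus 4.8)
The two inequalities are not independent: the real content is (3.2), and (3.1) is a consequence of it together with the estimate (2.13) already established. Indeed, inserting (3.2) into (2.13) gives
\begin{align*}
\int_M\Psi^{-\beta}\mathcal{M}\phi^s\leq \frac{C}{R^2}\left(\frac{C}{R^2}\int_{A_R}e^{2(n-1)f}|\partial f|^2\Psi^{-\beta}\phi^{s-4}+\epsilon R^2\int_M\Psi^{-\beta}\mathcal{M}\phi^s+C\int_{A_R}e^{2(n-1)f}|\partial f|^4\Psi^{-\beta}\phi^{s-2}\right),
\end{align*}
and for $\epsilon$ small the term $C\epsilon\int_M\Psi^{-\beta}\mathcal{M}\phi^s$ is absorbed into the left-hand side, yielding (3.1). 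So the plan is to prove (3.2) first and then read off (3.1).

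\textbf{The integration by parts and the key cancellation.} To bound $\int_{A_R}e^{2(n-1)f}|g|^2\Psi^{-\beta}\phi^{s-2}$ I would use (2.6), $f_{\alpha\bar\alpha}=-ng$, to write $|g|^2=g\bar g=-\tfrac1n f_{\alpha\bar\alpha}\bar g$ and integrate the outer $\bar\alpha$-derivative by parts (no boundary terms, since $\phi$ is compactly supported and $\tau=0$):
\begin{align*}
\int e^{2(n-1)f}|g|^2\Psi^{-\beta}\phi^{s-2}=\frac1n\,\mathrm{Re}\int f_\alpha\,\nabla_{\eta_{\bar\alpha}}\!\left(\bar g\,e^{2(n-1)f}\Psi^{-\beta}\phi^{s-2}\right).
\end{align*}
Differentiating the weight produces the factor $\bar g_{\bar\alpha}$, which I expand by (2.8); the factor $2(n-1)f_{\bar\alpha}$ from $e^{2(n-1)f}$; the factor $-\beta\Psi^{-1}\Psi_{\bar\alpha}$, estimated by (2.15); and the cutoff factor $(s-2)\phi^{-1}\phi_{\bar\alpha}$, supported on $A_R$ with $|\partial\phi|\le C/R$. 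The two contributions carrying $\mathrm{Re}(\bar g)=|\partial f|^2+H(f)$, namely the $2\bar g f_{\bar\alpha}$ part of $\bar g_{\bar\alpha}$ and the $2(n-1)f_{\bar\alpha}$ part, combine with coefficient $\tfrac2n+\tfrac{2(n-1)}n=2$ into $2\int(|\partial f|^4+H(f)|\partial f|^2)e^{2(n-1)f}\Psi^{-\beta}\phi^{s-2}$. Comparing this with the pointwise identity $|g|^2=(|\partial f|^2+H(f))^2+f_0^2=|\partial f|^4+2H(f)|\partial f|^2+H(f)^2+f_0^2$ makes the leading parts cancel and leaves the crucial relation
\begin{align*}
\int\big(H(f)^2+f_0^2\big)e^{2(n-1)f}\Psi^{-\beta}\phi^{s-2}=\int|\partial f|^4 e^{2(n-1)f}\Psi^{-\beta}\phi^{s-2}+(\text{error}),
\end{align*}
so that, after $2H|\partial f|^2\le H^2+|\partial f|^4$, the full integral $\int e^{2(n-1)f}|g|^2\Psi^{-\beta}\phi^{s-2}$ is dominated by $\int e^{2(n-1)f}|\partial f|^4\Psi^{-\beta}\phi^{s-2}$ plus the error terms.

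\textbf{Estimating the errors and closing.} The errors fall into three groups. The terms built from $D_{\bar\alpha}+E_{\bar\alpha}-G_{\bar\alpha}$ and from $\Psi_{\bar\alpha}$ are, by the positivity of the coefficients in (2.10) and by (2.15), all bounded by $C|\partial f|\sqrt{\mathcal{M}}\,e^{(n-1)f}\Psi^{-\beta}\phi^{s-2}$; Cauchy--Schwarz followed by Young's inequality with weight $R^2$ turns these into $\tfrac{\epsilon R^2}{2}\int_M\Psi^{-\beta}\mathcal{M}\phi^s+\tfrac{C}{R^2}\int_{A_R}e^{2(n-1)f}|\partial f|^2\Psi^{-\beta}\phi^{s-4}$. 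The cutoff term is bounded by $\tfrac{C}{R}\int_{A_R}|\partial f||g|e^{2(n-1)f}\Psi^{-\beta}\phi^{s-3}$, and Cauchy--Schwarz with Young gives $\eta\int_{A_R}e^{2(n-1)f}|g|^2\Psi^{-\beta}\phi^{s-2}+\tfrac{C}{R^2}\int_{A_R}e^{2(n-1)f}|\partial f|^2\Psi^{-\beta}\phi^{s-4}$. Finally the $(H'(f)-2H(f))$ terms, controlled through $|H'(f)-2H(f)|\le CH(f)$ from (2.11), produce $\int H(f)|\partial f|^2 e^{2(n-1)f}\Psi^{-\beta}\phi^{s-2}\le \epsilon\int e^{2(n-1)f}|g|^2\Psi^{-\beta}\phi^{s-2}+C\int_{A_R}e^{2(n-1)f}|\partial f|^4\Psi^{-\beta}\phi^{s-2}$, using $H\le|g|$. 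Choosing $\eta$ and $\epsilon$ small so the multiples of $\int_{A_R}e^{2(n-1)f}|g|^2\Psi^{-\beta}\phi^{s-2}$ are absorbed on the left gives exactly (3.2).

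\textbf{Main obstacle.} The delicate point is the exact coefficient bookkeeping in the cancellation: it is only because the two $\mathrm{Re}(\bar g)$-contributions sum to the coefficient $2$ that the dangerous quantities $\int H(f)^2$ and $\int f_0^2$ — which are of the same order as the integral being estimated and cannot be dominated by $\int|\partial f|^4$ on their own — get traded for $\int|\partial f|^4$ plus genuinely lower-order pieces. The second subtlety is to keep every remaining error either a true cutoff integral on $A_R$ (carrying $R^{-2}$ and $\phi^{s-4}$) or a small multiple of $\int_{A_R}e^{2(n-1)f}|g|^2\Psi^{-\beta}\phi^{s-2}$ or of $R^2\int_M\Psi^{-\beta}\mathcal{M}\phi^s$, so that the absorption argument closes; handling the Reeb-direction quantities $f_0$ and $f_{0\alpha}$ only through $\mathcal{M}$, rather than estimating them directly, is what makes this possible.
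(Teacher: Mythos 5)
Your proposal is correct and follows essentially the same route as the paper: the same integration by parts of $|g|^2=-\tfrac1n\,\mathrm{Re}(f_{\alpha\bar\alpha}\bar g)$ against the weight $e^{2(n-1)f}\Psi^{-\beta}\phi^{s-2}$, the same use of (2.8), (2.14), (2.15) and Young's inequality on the error terms, and the same derivation of (3.1) by inserting (3.2) into (2.13) and absorbing the small multiple of $\int_M\Psi^{-\beta}\mathcal{M}\phi^s$. The only (cosmetic) difference is that you track the exact cancellation $\int(H^2+f_0^2)=\int|\partial f|^4+\mathrm{error}$, while the paper keeps the $H^2$ contribution with coefficient $\tfrac12+\epsilon<1$ and absorbs it on the left via $|g|^2\geq H^2(f)$.
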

      \begin{proof}
           From (2.13) and integrating by parts, we have
           \begin{align}
               &\int_M\Psi^{-\beta}\mathcal{M}\phi^s 
               \nonumber\\
               \leq & \frac{C}{R^2}\int_{A_R} e^{2(n-1)f}|g|^2\Psi^{-\beta}\phi^{s-2} 
               \nonumber\\
               =&-\frac{C}{R^2}\int_{A_R} e^{2(n-1)f}Re(f_{\alpha\bar{\alpha}}\bar{g})\Psi^{-\beta}\phi^{s-2}
               \\
               =&\frac{C}{R^2}\int_{A_R} \{ e^{2(n-1)f}Re(f_{\alpha}\bar{g}_{\bar{\alpha}})\Psi^{-\beta}\phi^{s-2}+(s-2) e^{2(n-1)f}Re(f_{\alpha}\phi_{\bar{\alpha}}\bar{g})\Psi^{-\beta}\phi^{s-3}
               \nonumber\\
               &+2(n-1) e^{2(n-1)f}|\partial f|^2Re(\bar{g})\Psi^{-\beta}\phi^{s-2}-\beta e^{2(n-1)f}Re(f_{\alpha}\Psi_{\bar{\alpha}}\bar{g})\Psi^{-\beta-1}\phi^{s-2}\}.\nonumber
           \end{align}
          Similar to get (2.14), we obtain
          \begin{align*}
              |D_{\bar{\alpha}}+E_{\bar{\alpha}}-G_{\bar{\alpha}}|\leq C\sqrt{\mathcal{M}}e^{-(n-1)f}.
          \end{align*}
          Thus
      \begin{align*}
          Re(f_{\alpha}\bar{g}_{\bar{\alpha}})=&Re( (D_{\bar{\alpha}}+E_{\bar{\alpha}}-G_{\bar{\alpha}})f_\alpha+2\bar{g}|\partial f|^2+(H^{'}(f)-2H(f))|\partial f|^2)
          \\
          \leq & C|\partial f|\sqrt{\mathcal{M}}e^{-(n-1)f}+4|\partial f|^4+\frac{1}{2}H^2(f).
      \end{align*} 
      Using Young’s inequality, we have
      \begin{align*}
          &\int_{A_R}  e^{2(n-1)f}Re(f_{\alpha}\bar{g}_{\bar{\alpha}})\Psi^{-\beta}\phi^{s-2}
          \\
          \leq & \int_{A_R}  e^{2(n-1)f}( C|\partial f|\sqrt{\mathcal{M}}e^{-(n-1)f}+4|\partial f|^4+\frac{1}{2}H^2(f))\Psi^{-\beta}\phi^{s-2}
          \\
          \leq & \frac{\epsilon R^2}{2}\int_M\Psi^{-\beta}\mathcal{M}\phi^s+\frac{C}{R^2}\int_{A_R}e^{2(n-1)f}|\partial f|^2\Psi^{-\beta}\phi^{s-4}
          \\
          &+\frac{1}{2}\int_{A_R}e^{2(n-1)f}H^2(f)\Psi^{-\beta}\phi^{s-2}+4\int_{A_R}e^{2(n-1)f}|\partial f|^4\Psi^{-\beta}\phi^{s-2}.
      \end{align*}
      Similarly, we find that
      \begin{align*}
          &(s-2)\int_{A_R}e^{2(n-1)f}Re(f_{\alpha}\phi_{\bar{\alpha}}\bar{g})\Psi^{-\beta}\phi^{s-3}
          \\
          \leq & \epsilon \int_{A_R}e^{2(n-1)f}|g|^2\Psi^{-\beta}\phi^{s-2}+\frac{C}{R^2}\int_{A_R}e^{2(n-1)f}|\partial f|^2\Psi^{-\beta}\phi^{s-4},
          \\
          &2(n-1)\int_{A_R}e^{2(n-1)f}|\partial f|^2Re(\bar{g})\Psi^{-\beta}\phi^{s-2}
          \\
          =&2(n-1)\int_{A_R}e^{2(n-1)f}|\partial f|^4\Psi^{-\beta}\phi^{s-2}+2(n-1)\int_{A_R}e^{2(n-1)f}|\partial f|^2H(f)\Psi^{-\beta}\phi^{s-2}
          \\
          \leq & \epsilon\int_{A_R}e^{2(n-1)f}H^2(f)\Psi^{-\beta}\phi^{s-2}+C\int_{A_R}e^{2(n-1)f}|\partial f|^4\Psi^{-\beta}\phi^{s-2}.
      \end{align*}
      Moreover, from (2.15) and $\Psi=|g|^2e^{-2f}$, we have
      \begin{align*}
          &-\beta\int_{A_R}e^{2(n-1)f}Re(f_{\alpha}\Psi_{\bar{\alpha}}\bar{g})\Psi^{-\beta-1}\phi^{s-2}
          \\
          \leq &C\int_{A_R}e^{(n-3)f}|\partial f||g|^2\sqrt{\mathcal{M}}\Psi^{-\beta-1}\phi^{s-2}
          \\
          \leq &\frac{\epsilon R^2}{2}\int_M\Psi^{-\beta}\mathcal{M}\phi^s+\frac{C}{R^2}\int_{A_R}e^{2(n-1)f}|\partial f|^2\Psi^{-\beta}\phi^{s-4}.
      \end{align*}
      Hence, we derive that 
      \begin{align*}
          &\int_{A_R} e^{2(n-1)f}|g|^2\Psi^{-\beta}\phi^{s-2}
          \\
          \leq& \epsilon R^2 \int_M\Psi^{-\beta}\mathcal{M}\phi^s+\frac{C}{R^2}\int_{A_R}e^{2(n-1)f}|\partial f|^2\Psi^{-\beta}\phi^{s-4}
          \\
          &+(\frac{1}{2}+\epsilon)\int_{A_R}e^{2(n-1)f}H^2(f)\Psi^{-\beta}\phi^{s-2}+C\int_{A_R}e^{2(n-1)f}|\partial f|^4\Psi^{-\beta}\phi^{s-2}.
      \end{align*}
      Notice that
      \begin{align*}
          \int_{A_R} e^{2(n-1)f}|g|^2\Psi^{-\beta}\phi^{s-2}\geq \int_{A_R}e^{2(n-1)f}H^2(f)\Psi^{-\beta}\phi^{s-2},
      \end{align*}
      since $ |g|^2\geq H^2(f)$. Therefore, we obtain (3.2) by choosing a small $\epsilon$. Combining (3.2) and (3.3), we have
      \begin{align*}
          &\int_M\Psi^{-\beta}\mathcal{M}\phi^s
          \leq \frac{C}{R^2}\int_{A_R} e^{2(n-1)f}|g|^2\Psi^{-\beta}\phi^{s-2}
          \\
          \leq &\epsilon C\int_M\Psi^{-\beta}\mathcal{M}\phi^s+\frac{C}{R^4}\int_{A_R}e^{2(n-1)f}|\partial f|^2\Psi^{-\beta}\phi^{s-4}
          \\
          &+\frac{C}{R^2}\int_{A_R}e^{2(n-1)f}|\partial f|^4\Psi^{-\beta}\phi^{s-2}.
      \end{align*}
      Similarly, we obtain (3.1) by choosing a small $\epsilon$.
        
      \end{proof}
      \begin{lemma}
          Let $n=1 $ or  $n=2$. If $s>0$ is large enough and $\beta>0$ is small enough, then there exists a constant $C>0$ such that 
          \begin{align*}
              \int_{A_R}e^{2(n-1)f}|\partial f|^2\Psi^{-\beta}\phi^{s-4}\leq \frac{C}{R^{2(n-\beta)}}Vol(B_{2R})
          \end{align*}
          for $R>0$ large enough.
      \end{lemma}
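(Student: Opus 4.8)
My plan is to bound the weighted gradient integral by decoupling it into a pointwise estimate for the weight on the annulus times a scale‑invariant (unweighted) Caccioppoli bound for $|\partial f|^2$, and then to close with the volume hypothesis. The engine for both is the pointwise consequence of the equation: since $H>0$, identity (2.3) gives $-\Delta_b f=2n|\partial f|^2+2nH(f)\ge 2n|\partial f|^2$, i.e. $2n|\partial f|^2\le-\Delta_b f$. Equivalently, the function $v:=e^{(n-1+\beta)f}$ is super‑harmonic, since a direct computation from (2.3) yields
\begin{align*}
-\Delta_b v=2(n-1+\beta)(1-\beta)e^{(n-1+\beta)f}|\partial f|^2+2n(n-1+\beta)H(f)e^{(n-1+\beta)f}\ge 0 ,
\end{align*}
which is the device that lets me trade the gradient term for a divergence and integrate by parts.

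For the Caccioppoli step I would test $2n|\partial f|^2\le-\Delta_b f$ against $\phi^{s-4}$. Integrating by parts and using that $\nabla_b\phi$ is supported in $A_R$,
\begin{align*}
2n\int_M|\partial f|^2\phi^{s-4}\le\int_M(-\Delta_b f)\phi^{s-4}=(s-4)\int_{A_R}\phi^{s-5}\,\nabla_b f\cdot\nabla_b\phi ,
\end{align*}
where the non‑negative term $2n\int_M H(f)\phi^{s-4}$ has been discarded on the favourable side. Applying $|\partial\phi|\le C/R$, Cauchy–Schwarz (with $s$ large so that $\phi^{s-6}\le1$) and absorbing the factor $\big(\int|\partial f|^2\phi^{s-4}\big)^{1/2}$, I obtain the clean bound
\begin{align*}
\int_{B_{2R}}|\partial f|^2\phi^{s-4}\le\frac{C}{R^2}\,Vol(B_{2R}) .
\end{align*}

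It then remains to control the weight pointwise on $A_R$. Writing $e^{2(n-1)f}\Psi^{-\beta}=e^{2(n-1+\beta)f}|g|^{-2\beta}$ and using $|g|\ge\mathrm{Re}\,g=|\partial f|^2+H(f)\ge H(f)$, subcriticality of $F$ with exponent $1+\tfrac2n$ (equivalently $t^{-(1+2/n)}F(t)$ non‑increasing) gives $H(f)=F(u)/u\ge c\,u^{2/n}=c\,e^{2f}$ once $u$ is small, which holds on $A_R$ for large $R$ because $u\to0$ at infinity. Hence $|g|^{-2\beta}\le Ce^{-4\beta f}$ and $e^{2(n-1)f}\Psi^{-\beta}\le C\,u^{\frac{2(n-1-\beta)}{n}}$ on $A_R$. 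Since $\Delta_b u=-2n^2F(u)\le0$, $u$ is super‑harmonic, so Corollary 2.5 applies and converts the decay of $u$ along $A_R$ into a power $R^{-2(n-1)+2\beta}$ of the weight; combining with the Caccioppoli bound,
\begin{align*}
\int_{A_R}e^{2(n-1)f}|\partial f|^2\Psi^{-\beta}\phi^{s-4}\le\Big(\sup_{A_R}e^{2(n-1)f}\Psi^{-\beta}\Big)\int_{B_{2R}}|\partial f|^2\phi^{s-4}\le\frac{C}{R^{2(n-1)-2\beta}}\cdot\frac{C}{R^2}\,Vol(B_{2R}),
\end{align*}
which is the asserted $\tfrac{C}{R^{2(n-\beta)}}Vol(B_{2R})$.

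The hard part is precisely this last weight estimate. The lower bound for $|g|$ through $H(f)$ must be balanced exactly against the decay rate of $u$ supplied by Corollary 2.5, and reconciling the exponent $\tfrac{2(n-1-\beta)}{n}$ with that growth is delicate; this is exactly where the restriction $n\le 2$ and the growth $Vol(B_{2R})\le CR^{2n+2}$ are essential, and where one may need to refine the crude bound $|g|\ge H(f)$ (for instance by also invoking $|g|\ge|\partial f|^2$ together with CR gradient control). In higher dimensions this balance fails pointwise, which is the structural reason Theorem 1.3 replaces it by the integral hypothesis (1.11), an averaged substitute for the pointwise control of the weight used here.
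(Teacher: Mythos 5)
Your first step (the unweighted Caccioppoli bound $\int_{B_{2R}}|\partial f|^2\phi^{s-4}\le CR^{-2}\,Vol(B_{2R})$) is fine, but the decoupling strategy breaks down at the weight estimate, and you have correctly sensed that this is where the difficulty sits. After using $|g|\ge H(f)\ge Ce^{2f}$ you reduce the weight to $e^{2(n-1+\beta)f}|g|^{-2\beta}\le Cu^{2(n-1-\beta)/n}$. For $n=2$ this is $u^{1-\beta}$, a \emph{positive} power of $u$, so the pointwise bound $\sup_{A_R}u^{1-\beta}\le CR^{-2(1-\beta)}$ that you need amounts to a quantitative decay rate $u\lesssim R^{-2}$ on $A_R$. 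No such rate is available: the hypotheses only give $u\to 0$ at infinity with no rate, and Corollary 2.3 is a \emph{lower} bound on superharmonic functions, which goes in the wrong direction for a positive power of $u$. For $n=1$ the exponent is $-2\beta$, so Corollary 2.3 does apply, but it gives $u^{-2\beta}\le Cr^{6\beta}$, i.e.\ a loss of $R^{6\beta}$ rather than the $R^{2\beta}$ the statement allows; you would prove a strictly weaker inequality than the one claimed (harmless for the eventual application, but not the lemma as stated). In neither dimension does the sup-times-integral factorization deliver the asserted exponent $2(n-\beta)$.

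The paper's proof avoids any pointwise control of the weight. It keeps the full weight $e^{2(n+\beta-1)f}H^{-2\beta}(f)$ inside the integral, writes $|\partial f|^2=\mathrm{Re}(g)-H(f)=-\tfrac1n\mathrm{Re}(f_{\alpha\bar\alpha})-H(f)$, and integrates by parts. The dimension restriction $n\le 2$ enters not through a pointwise balance but through the coefficient
\begin{align*}
\frac{2(n+\beta-1)-2(n\kappa-n)\beta}{n}<1,
\end{align*}
which permits absorbing the gradient term back into the left-hand side; the derivative of the weight produces the term $-\tfrac{2\beta}{n}\int H^{-1-2\beta}H'|\partial f|^2$, controlled by $H'\ge(n\kappa-n)H$, and the leftover boundary term is closed by Young's inequality against the \emph{negatively signed} term $-C\int e^{2(n+\beta-1)f}H^{1-2\beta}(f)\phi^{s-4}$, using only the lower bound $H(f)\ge Ce^{2f}$ (which does follow from $u\to0$ and $H'\le 2H$). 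That sign structure is what replaces the missing upper decay rate for $u$; to repair your argument you would need to reintroduce it, which effectively means abandoning the sup-times-Caccioppoli decoupling.
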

      \begin{proof}
          Note that 
          \begin{align}
              \Psi^{-\beta}=|g|^{-2\beta}e^{2\beta f}=e^{2\beta f}(|\partial f|^4+H^2(f)+|f_0|^2 )^{-\beta}\leq e^{2\beta f}H^{-2\beta}(f).
          \end{align}
          Then we have
          \begin{align*}
              &\int_{A_R}e^{2(n-1)f}|\partial f|^2\Psi^{-\beta}\phi^{s-4}
              \\
              \leq& \int_{A_R}e^{2(n-1)f}|\partial f|^2e^{2\beta f}H^{-2\beta}(f)\phi^{s-4}
              \\
              =&\int_{A_R}e^{2(n+\beta-1)f}(Re(g)-H(f))H^{-2\beta}(f)\phi^{s-4}
              \\
              =&-\frac{1}{n}\int_{A_R}e^{2(n+\beta-1)f}Re(f_{\alpha\bar{\alpha}})H^{-2\beta}(f)\phi^{s-4}-\int_{A_R}e^{2(n+\beta-1)f}H^{1-2\beta}(f)\phi^{s-4}
              \\
              =&\frac{2(n+\beta-1)}{n}\int_{A_R}e^{2(n+\beta-1)f}|\partial f|^2H^{-2\beta}(f)\phi^{s-4}
              \\
              &-\frac{2\beta}{n}\int_{A_R}e^{2(n+\beta-1)f}|\partial f|^2H^{-1-2\beta}(f)H^{'}\phi^{s-4}
              \\
              &+\frac{s-4}{n}\int_{A_R}e^{2(n+\beta-1)f}Re(f_{\alpha}\phi_{\bar{\alpha}})H^{-2\beta}(f)\phi^{s-5}-\int_{A_R}e^{2(n+\beta-1)f}H^{1-2\beta}(f)\phi^{s-4}
              \\
              \leq &\frac{2(n+\beta-1)-2(n\kappa-n)\beta}{n}\int_{A_R}e^{2(n+\beta-1)f}|\partial f|^2H^{-2\beta}(f)\phi^{s-4}
              \\
              &+\frac{s-4}{n}\int_{A_R}e^{2(n+\beta-1)f}Re(f_{\alpha}\phi_{\bar{\alpha}})H^{-2\beta}(f)\phi^{s-5}-\int_{A_R}e^{2(n+\beta-1)f}H^{1-2\beta}(f)\phi^{s-4},
          \end{align*}
          since $H^{'}(f)\geq (n\kappa-n)H(f)>H(f) $. Moreover, for small $\beta$, we have
          \begin{align*}
              \frac{2(n+\beta-1)-2(n\kappa-n)\beta}{n}<1
          \end{align*}
          whenever $n=1$ or $n=2$. Hence,
          \begin{align*}
              &\int_{A_R}e^{2(n+\beta-1)f}|\partial f|^2H^{-2\beta}(f)\phi^{s-4}
              \\
              \leq &C\int_{A_R}e^{2(n+\beta-1)f}|\partial f|\cdot|\partial \phi|H^{-2\beta}(f)\phi^{s-5}-C\int_{A_R}e^{2(n+\beta-1)f}H^{1-2\beta}(f)\phi^{s-4}
              \\
              \leq & \epsilon \int_{A_R}e^{2(n+\beta-1)f}|\partial f|^2H^{-2\beta}(f)\phi^{s-4}+\frac{C}{R^2}\int_{A_R}e^{2(n+\beta-1)f}H^{-2\beta}(f)\phi^{s-6}
              \\
              &-C\int_{A_R}e^{2(n+\beta-1)f}H^{1-2\beta}(f)\phi^{s-4}.
          \end{align*}
          This implies that 
          \begin{align*}
              &\int_{A_R}e^{2(n+\beta-1)f}|\partial f|^2H^{-2\beta}(f)\phi^{s-4}
              \\
              \leq & \frac{C}{R^2}\int_{A_R}e^{2(n+\beta-1)f}H^{-2\beta}(f)\phi^{s-6}
              -C\int_{A_R}e^{2(n+\beta-1)f}H^{1-2\beta}(f)\phi^{s-4}.
          \end{align*}
          According to the fact that $H^{'}(f)\leq 2H(f)$, we can assume that there exists a $f_0$ such that for any $f\leq f_0$ 
          \begin{align*}
              H(f)\geq \frac{H(f_0)}{e^{2f_0}}e^{2f}=Ce^{2f}.
          \end{align*}
          On the other hand, $u$ tends to zero at infinity. In this case, there exists a $R_0$ such that if $R\geq R_0$, we have $f\leq f_0 $. Hence, we get $ H(f)\geq Ce^{2f}$ for $R$ large enough. Let $a=\frac{n+\beta-1}{n-\beta}$, then
          \begin{align*}
              e^{2(n+\beta-1)f}H^{-2\beta}(f)\leq C e^{2(n+\beta-1-a)f}H^{a-2\beta}(f).
          \end{align*}
          Using Young’s inequality, we obtain
          \begin{align*}
              &\int_{A_R}e^{2(n+\beta-1)f}|\partial f|^2H^{-2\beta}(f)\phi^{s-4}
              \\
              \leq & \frac{C}{R^2}\int_{A_R}e^{2(n+\beta-1-a)f}H^{a-2\beta}(f)\phi^{s-6}
              -C\int_{A_R}e^{2(n+\beta-1)f}H^{1-2\beta}(f)\phi^{s-4}
              \\
              \leq & \frac{C}{R^{\frac{2(1-2\beta)}{1-a}}}\int_{A_R}\phi^{s-4-\frac{2(1-2\beta)}{1-a}} +(\delta-C) \int_{A_R}e^{2(n+\beta-1)f}H^{1-2\beta}(f)\phi^{s-4}.
          \end{align*}
         Letting $s>4+\frac{2(1-2\beta)}{1-a}$ and $\delta$ sufficiently small, we have
         \begin{align}
           \int_{A_R}e^{2(n+\beta-1)f}|\partial f|^2H^{-2\beta}(f)\phi^{s-4}\leq \frac{C}{R^{\frac{2(1-2\beta)}{1-a}}} Vol(B_{2R}).
         \end{align}
         This complete the proof since
         \begin{align*}
             \int_{A_R}e^{2(n-1)f}|\partial f|^2\Psi^{-\beta}\phi^{s-4}\leq \int_{A_R}e^{2(n+\beta-1)f}|\partial f|^2H^{-2\beta}(f)\phi^{s-4}.
         \end{align*}
      \end{proof}
      \begin{lemma}
          Let $n=1$ and $s$, $\beta$ be as in Lemma 3.2. Then there exist constants $C>0$ and $\epsilon>0$ such that
          \begin{align*}
              \int_{A_R}|\partial f|^4\Psi^{-\beta}\phi^{s-2}\leq \epsilon R^2\int_{A_R}\mathcal{M}\Psi^{-\beta}\phi^{s}+\frac{C}{R^2}Vol(B_{2R})
          \end{align*}
          for $R>0$ large enough.
        \end{lemma}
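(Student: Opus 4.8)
The plan is to avoid integration by parts entirely and instead isolate $|\partial f|^4$ through a purely pointwise algebraic identity, then control it by $\mathcal{M}$ via the inequality (2.10). The starting point is that the definitions in (2.6) give
\begin{align*}
E_\alpha-D_\alpha=(g+2|\partial f|^2)f_\alpha .
\end{align*}
Contracting with $f_{\bar\alpha}$, using $f_\alpha f_{\bar\alpha}=|\partial f|^2$ together with $\mathrm{Re}(g)=|\partial f|^2+H(f)$, I would obtain the pointwise identity
\begin{align*}
\mathrm{Re}\big((E_\alpha-D_\alpha)f_{\bar\alpha}\big)=3|\partial f|^4+H(f)|\partial f|^2 .
\end{align*}
Since $H>0$, the last term is nonnegative, so pointwise $3|\partial f|^4\le \mathrm{Re}\big((E_\alpha-D_\alpha)f_{\bar\alpha}\big)\le |E_\alpha-D_\alpha|\,|\partial f|$.

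Next I would estimate $|E_\alpha-D_\alpha|$ by $\mathcal{M}$. Writing $E_\alpha-D_\alpha=(E_\alpha-G_\alpha)-(D_\alpha+G_\alpha)+2G_\alpha$ and recalling that for $n=1$ the weight $e^{2(n-1)f}$ is identically $1$, the quantities $|E_\alpha-G_\alpha|^2$, $|D_\alpha+G_\alpha|^2$ and $|G_\alpha|^2$ all occur with positive coefficients on the right-hand side of (2.10). Hence each is bounded by $C\mathcal{M}$, giving $|E_\alpha-D_\alpha|\le C\sqrt{\mathcal{M}}$ pointwise, and therefore
\begin{align*}
|\partial f|^4\le C\sqrt{\mathcal{M}}\,|\partial f|
\end{align*}
everywhere. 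This is exactly where the hypothesis $n=1$ enters: for $n\ge 2$ the factor $e^{-(n-1)f}$ would survive and break the clean pointwise comparison, which is why the annulus term in the statement carries no exponential weight.

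Finally I would multiply by $\Psi^{-\beta}\phi^{s-2}$, integrate over $A_R$, and split with Young's inequality as
\begin{align*}
C\sqrt{\mathcal{M}}\,|\partial f|\,\Psi^{-\beta}\phi^{s-2}\le \frac{\epsilon R^2}{2}\,\mathcal{M}\,\Psi^{-\beta}\phi^{s}+\frac{C}{R^2}\,|\partial f|^2\,\Psi^{-\beta}\phi^{s-4},
\end{align*}
where the exponents are arranged so that the two factors carry $\Psi^{-\beta/2}\phi^{s/2}$ and $\Psi^{-\beta/2}\phi^{s/2-2}$. Integrating yields
\begin{align*}
\int_{A_R}|\partial f|^4\Psi^{-\beta}\phi^{s-2}\le \epsilon R^2\int_{A_R}\mathcal{M}\Psi^{-\beta}\phi^{s}+\frac{C}{R^2}\int_{A_R}|\partial f|^2\Psi^{-\beta}\phi^{s-4}.
\end{align*}
The last integral is precisely what Lemma 3.3 controls for $n=1$, giving $\frac{C}{R^{2(1-\beta)}}\mathrm{Vol}(B_{2R})$; since $\beta$ is small one has $\frac{1}{R^2}\cdot\frac{1}{R^{2(1-\beta)}}=\frac{1}{R^{4-2\beta}}\le\frac{1}{R^2}$ for $R$ large, which produces the stated bound. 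The only genuine obstacle is recognizing the algebraic identity for $E_\alpha-D_\alpha$ that isolates $|\partial f|^4$ (with a harmless nonnegative $H|\partial f|^2$ remainder); once that is in hand, the conclusion follows from Young's inequality and a direct appeal to Lemma 3.3.
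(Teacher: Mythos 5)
Your argument breaks at the very first step: the identity $E_\alpha-D_\alpha=(g+2|\partial f|^2)f_\alpha$ is false. From the definitions in (2.6) one has $D_\alpha=D_{\alpha\beta}f_{\bar\beta}=f_{\alpha\beta}f_{\bar\beta}-2|\partial f|^2f_\alpha$ and $E_\alpha=E_{\alpha\bar\beta}f_\beta=f_{\alpha\bar\beta}f_\beta+gf_\alpha$, so that
\begin{align*}
E_\alpha-D_\alpha=f_{\alpha\bar\beta}f_\beta-f_{\alpha\beta}f_{\bar\beta}+(g+2|\partial f|^2)f_\alpha ,
\end{align*}
and the second--derivative terms $f_{\alpha\bar\beta}f_\beta-f_{\alpha\beta}f_{\bar\beta}$ do not cancel. (You appear to have read $D_\alpha$ off the display (2.7), which contains a typo: it should be $f_{\alpha\beta}f_{\bar\beta}$, not $f_{\alpha\bar\beta}f_\beta$, as one checks by verifying the first line of (2.8) or by comparing with Jerison--Lee.) A decisive sanity check: for the extremal solutions (1.3) on $\mathbb{H}^1$ one has $D_{\alpha\beta}=E_{\alpha\bar\beta}=G_\alpha=0$, hence $E_\alpha=D_\alpha=0$ and $\mathcal{M}\equiv 0$, while $\mathrm{Re}\big((g+2|\partial f|^2)f_\alpha f_{\bar\alpha}\big)=3|\partial f|^4+H(f)|\partial f|^2>0$ wherever $\partial f\neq 0$. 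So your claimed identity, combined with the (correct) bound $|E_\alpha-D_\alpha|\le C\sqrt{\mathcal{M}}$, would give the pointwise inequality $|\partial f|^4\le C\sqrt{\mathcal{M}}\,|\partial f|$, which forces $\partial f\equiv 0$ precisely in the rigidity case $\mathcal{M}\equiv 0$ --- contradicting the existence of the nonconstant extremals. The inequality you are after simply cannot hold pointwise; it is intrinsically an integral statement.

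A further warning sign is that your proof never invokes the hypothesis $2<\kappa\le 3$, which is exactly what this lemma exists to exploit. The paper's proof writes $|\partial f|^2=\mathrm{Re}(g)-H(f)$ with $\mathrm{Re}(g)=-\mathrm{Re}(f_{\alpha\bar\alpha})$ for $n=1$, integrates $\int_{A_R}|\partial f|^4e^{2\beta f}H^{-2\beta}(f)\phi^{s-2}$ by parts, and uses $H'\ge(\kappa-1)H$ to produce the positive coefficient $2\beta(\kappa-2)$ in front of the $|\partial f|^4$ integral on the left-hand side; the remaining terms are then absorbed using Young's inequality, the decay $H(f)\ge Ce^{2f}$, the superharmonicity lower bound of Corollary 2.3 (to control $H^{-8\beta}\Psi^{-\beta(1-4\beta)}\le CR^{54\beta}$), and Lemmas 3.1--3.2. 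Your closing steps (the Young splitting with weights $\Psi^{-\beta/2}\phi^{s/2}$ and $\Psi^{-\beta/2}\phi^{s/2-2}$, and the appeal to the gradient estimate of the previous lemma --- which is Lemma 3.2, not 3.3) would be fine if the pointwise bound held, but since it does not, the argument cannot be repaired without replacing the first step by a genuinely integral mechanism of the above type.
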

          \begin{proof}
             From (3.4) and $n=1$, we have
             \begin{align}
                &  \int_{A_R}|\partial f|^4\Psi^{-\beta}\phi^{s-2}
                 \nonumber\\
                 \leq &\int_{A_R}|\partial f|^4 e^{2\beta f}H^{-2\beta}(f) \phi^{s-2}
                 \\
                 = & -\int_{A_R}|\partial f|^2Re(f_{\alpha\bar{\alpha}}) e^{2\beta f}H^{-2\beta}(f) \phi^{s-2}-\int_{A_R}|\partial f|^2 e^{2\beta f}H^{1-2\beta}(f) \phi^{s-2}
                 \nonumber\\
                 =& 2\beta \int_{A_R}|\partial f|^4e^{2\beta f}H^{-2\beta}(f) \phi^{s-2}-2\beta \int_{A_R}|\partial f|^4 e^{2\beta f}H^{-1-2\beta}(f)H^{'} \phi^{s-2}
                 \nonumber\\
                 &+(s-2)\int_{A_R}|\partial f|^2Re(f_{\alpha}\phi_{\bar{\alpha}}) e^{2\beta f}H^{-2\beta}(f) \phi^{s-3}-\int_{A_R}|\partial f|^2 e^{2\beta f}H^{1-2\beta}(f) \phi^{s-2}
                 \nonumber\\
                 &+\int_{A_R}Re(f_{\alpha}|\partial f|^2_{\bar{\alpha}}) e^{2\beta f}H^{-2\beta}(f) \phi^{s-2} .\nonumber
             \end{align}
             By (2.8) and $|D_{\bar{\alpha}}+E_{\bar{\alpha}}|\leq C\sqrt{\mathcal{M}}$, we compute that
             \begin{align*}
                 &\int_{A_R}Re(f_{\alpha}|\partial f|^2_{\bar{\alpha}}) e^{2\beta f}H^{-2\beta}(f) \phi^{s-2}
                 \\
                 = & \int_{A_R}Re(f_{\alpha}(D_{\bar{\alpha}}+E_{\bar{\alpha}})) e^{2\beta f}H^{-2\beta}(f) \phi^{s-2}+\int_{A_R}|\partial f|^4 e^{2\beta f}H^{-2\beta}(f) \phi^{s-2}
                 \\
                 &-\int_{A_R}|\partial f|^2 e^{2\beta f}H^{1-2\beta}(f) \phi^{s-2}
                 \\
                 \leq &C\int_{A_R}|\partial f|\sqrt{\mathcal{M}} e^{2\beta f}H^{-2\beta}(f) \phi^{s-2}+\int_{A_R}|\partial f|^4 e^{2\beta f}H^{-2\beta}(f) \phi^{s-2}
                 \\
                 &-\int_{A_R}|\partial f|^2 e^{2\beta f}H^{1-2\beta}(f) \phi^{s-2}.
             \end{align*}
             Since $ H^{'}(f)\geq (\kappa-1)H(f)>H(f)$ and $2<\kappa\leq 3$, we obtain
             \begin{align*}
                 &2\beta(\kappa-2)\int_{A_R}|\partial f|^4 e^{2\beta f}H^{-2\beta}(f) \phi^{s-2}
                 \\
                 \leq &(s-2)\int_{A_R}|\partial f|^2Re(f_{\alpha}\phi_{\bar{\alpha}}) e^{2\beta f}H^{-2\beta}(f) \phi^{s-3}+C\int_{A_R}|\partial f|\sqrt{\mathcal{M}} e^{2\beta f}H^{-2\beta}(f) \phi^{s-2}.
             \end{align*}
             By Young's inequality, we have
             \begin{align}
                 &2\beta(\kappa-2-\epsilon)\int_{A_R}|\partial f|^4 e^{2\beta f}H^{-2\beta}(f) \phi^{s-2}
                 \nonumber\\
                 \leq & \frac{C}{R^2}\int_{A_R}|\partial f|^2 e^{2\beta f}H^{-2\beta}(f) \phi^{s-4}+C\int_{A_R}|\partial f|\sqrt{\mathcal{M}} e^{2\beta f}H^{-2\beta}(f) \phi^{s-2}
                 \nonumber\\
                 = & \frac{C}{R^2}\int_{A_R}|\partial f|^2 e^{2\beta f}H^{-2\beta}(f) \phi^{s-4}+C\int_{A_R}|\partial f|\sqrt{\mathcal{M}} \Psi^{-\beta}|g|^{2\beta}H^{-2\beta}(f) \phi^{s-2}
                 \nonumber\\
                 \leq & \frac{C}{R^2}\int_{A_R}|\partial f|^2 e^{2\beta f}H^{-2\beta}(f) \phi^{s-4}+\epsilon R^2\int_{A_R}\mathcal{M}\Psi^{-\beta} \phi^{s}
                 \nonumber\\
                  & +\frac{C}{R^2} \int_{A_R}H^{-4\beta}(f)|g|^{4\beta}|\partial f|^2\Psi^{-\beta} \phi^{s-4},
             \end{align}
             where $\epsilon$ is a small constant. Using Young’s inequality again, we derive that 
             \begin{align}
                 &H^{-4\beta}(f)|g|^{4\beta}|\partial f|^2\Psi^{-\beta}\phi^{s-4}
                 \nonumber\\
                 \leq & 2\beta |g|^2\Psi^{-\beta}\phi^{s-2}+(1-2\beta)H^{-\frac{4\beta}{1-2\beta}}(f)|\partial f|^{\frac{2}{1-2\beta}}\Psi^{-\beta}\phi^{s-\frac{4(1-\beta)}{1-2\beta}}
                 \nonumber\\
                 \leq &2\beta |g|^2\Psi^{-\beta}\phi^{s-2}+\frac{1}{2}|\partial f|^4H^{-8\beta}(f)\Psi^{-\beta-\beta(1-4\beta)}\phi^{s-2}
                 +\frac{1-4\beta}{2}\phi^{s-\frac{6-8\beta}{1-4\beta}}.
             \end{align}
            As discussed in Lemma 3.2, we have $ H(f)\geq Ce^{2f}$ for $R$ large enough. Together with Corollary 2.3 and (3.4), we get
            \begin{align}
                H^{-8\beta}(f)\Psi^{-\beta(1-4\beta)}\leq &C H^{-8\beta}(f) e^{2\beta(1-4\beta)f}H^{-2\beta(1-4\beta)}(f)
                \nonumber
                \\
                \leq & Ce^{-18\beta f+8\beta^2f}\leq CR^{54\beta}
            \end{align}
            by choosing $\beta<\frac{1}{4}$. Substituting (3.8) and (3.9) into (3.7), we have
            \begin{align*}
                &\int_{A_R}|\partial f|^4 e^{2\beta f}H^{-2\beta}(f) \phi^{s-2}
                \\
                \leq & \frac{C}{R^2}\int_{A_R}|\partial f|^2 e^{2\beta f}H^{-2\beta}(f) \phi^{s-4}+\epsilon R^2\int_{A_R}\mathcal{M}\Psi^{-\beta} \phi^{s}
                \\
                &+\frac{C}{R^{2-54\beta}}\int_{A_R}|\partial f|^4 \Psi^{-\beta}\phi^{s-2}+\frac{C}{R^2}\int_{A_R}\phi^{s-\frac{6-8\beta}{1-4\beta}}+\frac{C}{R^2}\int_{A_R}|g|^2\Psi^{-\beta}\phi^{s-2}
                \\
                \leq &\epsilon R^2\int_{A_R}\mathcal{M}\Psi^{-\beta}+\frac{C}{R^{2-54\beta}}\int_{A_R}|\partial f|^4 \Psi^{-\beta}\phi^{s-2}+\frac{C}{R^2}Vol(B_{2R}),
            \end{align*}
            where we use (3.5), Lemma 3.1 and Lemma 3.2. Then the result follows from (3.6) for large $R$ and $\beta<\frac{1}{27}$.
            \end{proof}
            Now we are in the place to proof the Theorem 1.1.

            ~\\
      $\mathbf{Proof\ of\ Theorem\ 1.1.}$ Let $f$, $g$, $H$, $\phi$ and $\mathcal{M}$ be defined as in above. Assume that $R>0$ large enough such that $H(f)\geq Ce^{2f}$. From Lemma 3.1, Lemma 3.2 and Lemma 3.3, we have
      \begin{align*}
          &\int_M\Psi^{-\beta}\mathcal{M}\phi^s\leq \frac{C}{R^4}\int_{A_R}|\partial f|^2\Psi^{-\beta}\phi^{s-4}+\frac{C}{R^2}\int_{A_R}|\partial f|^4\Psi^{-\beta}\phi^{s-2}
          \\
          \leq & \frac{C}{R^{6-2\beta}}Vol(B_{2R})+C\epsilon \int_M\Psi^{-\beta}\mathcal{M}\phi^s+\frac{C}{R^4}Vol(B_{2R}).
      \end{align*}
      Choosing $\epsilon$ small enough and using Theorem 1.4, we obtain
      \begin{align*}
          \int_M\Psi^{-\beta}\mathcal{M}\phi^s\leq C.
      \end{align*}
       Combining with (3.2), we have
      \begin{align*}
          \int_{A_R}|g|^2\Psi^{-\beta}\phi^{s-2}\leq CR^2.
      \end{align*}
       Then Lemma 2.2 yields that
       \begin{align*}
           \int_M\Psi^{-\beta}\mathcal{M}\phi^s\leq C\left(\int_{A_R}\Psi^{-\beta}\mathcal{M}\phi^s \right)^{\frac{1}{2}}.
       \end{align*}
       Hence
       \begin{align*}
           \int_M\Psi^{-\beta}\mathcal{M}=0,
       \end{align*}
       i.e. $\mathcal{M}=0 $. By Lemma 2.1, we have $|\partial f|=0 $ or $H^{'}(f)=2H(f)$. Since $f$ satisfies (2.3) and $0<H\in C^{1} $, we have $H^{'}(f)=2H(f)$. In particular,
       \begin{align*}
           F(u)=Cu^{1+\frac{2}{n}}.
       \end{align*}
       The conclusion then follows from \cite{CMRW}. \qed

       \section{Proof of Theorem 1.2}
       In this section, we assume that $n=2$. Let $f$, $g$, $H$, $\phi$ and $\mathcal{M}$ be defined as in Section 2. In this section, we assume that there exist constants $\frac{3}{2}<\kappa\leq 2$ such that 
          \begin{align}
              tF^{'}(t)-\kappa F(t)\geq 0,\ \forall t>0. \nonumber
          \end{align} 
          In particular, we have
          \begin{align*}
              H^{'}(f)\geq (2\kappa-2)H(f)>H(f).
          \end{align*}
          First, we give some integral estimates.
          \begin{lemma}
              Let $s>0$ be large enough and $\beta>0$ be small enough. Then there exist a constant $C>0$ and a small constant $\epsilon>0$ such that the following inequalities hold
              \begin{align}
                  &\int_{A_R}e^{2f}H(f)|\partial f|^2\Psi^{-\beta}\phi^{s-2}
                  \nonumber\\
                  \leq &\frac{C}{R^2}\int_{A_R}e^{2(1-\beta)f}H(f)\phi^{s-4}+C\int_{A_R}e^{2(1-\beta)f}H^2(f)\phi^{s-2}
              \end{align}
              and 
              \begin{align}
                  &\int_{A_R}e^{2f}|\partial f|^4\Psi^{-\beta}\phi^{s-2}
                  \nonumber\\
                  \leq &\frac{C}{R^2}\int_{A_R}e^{2f}|\partial f|^2\Psi^{-\beta}\phi^{s-4}+C\int_{A_R}e^{2f}H(f)|\partial f|^2\Psi^{-\beta}\phi^{s-2}
                  \\
                  &+\epsilon R^2\int_{A_R}\Psi^{-\beta}\mathcal{M}\phi^s \nonumber
              \end{align}
              for $R>0$ large enough.
          \end{lemma}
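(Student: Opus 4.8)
The plan is to derive both estimates by the integration-by-parts scheme already used in Lemmas 3.2 and 3.3, now specialized to $n=2$. Here (2.5) reads $f_{\alpha\bar\alpha}=-2g$, and since $\mathrm{Re}(g)=|\partial f|^2+H(f)$ one may replace a factor of $|\partial f|^2$ by $-\tfrac12\mathrm{Re}(f_{\alpha\bar\alpha})-H(f)$ and integrate the resulting divergence term by parts. The one structural choice that distinguishes the two inequalities is whether to keep $\Psi^{-\beta}$: the right-hand side of (4.1) carries neither $\Psi^{-\beta}$ nor $|\partial f|$, so there I would first pass to the weight $e^{2\beta f}H^{-2\beta}(f)$ via (3.4) and convert all weights at the end, whereas (4.2) retains the factor $\Psi^{-\beta}$, so there I would keep $\Psi^{-\beta}$ inside the integral and differentiate it directly using (2.16).

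For (4.1), I would first bound the left-hand side by $I:=\int_{A_R}e^{2(1+\beta)f}H^{1-2\beta}(f)|\partial f|^2\phi^{s-2}$ using $\Psi^{-\beta}\le e^{2\beta f}H^{-2\beta}(f)$. Substituting $|\partial f|^2=-\tfrac12\mathrm{Re}(f_{\alpha\bar\alpha})-H(f)$ and integrating the $f_{\alpha\bar\alpha}$-term by parts against $e^{2(1+\beta)f}H^{1-2\beta}(f)\phi^{s-2}$ reproduces $I$ with coefficient $1+\beta$, creates the positive term $\tfrac{1-2\beta}{2}\int e^{2(1+\beta)f}|\partial f|^2H^{-2\beta}(f)H'\phi^{s-2}$, a cut-off term weighted by $\phi_{\bar\alpha}$, and the negative term $-\int e^{2(1+\beta)f}H^{2-2\beta}(f)\phi^{s-2}$. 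Moving the $I$-terms to one side and using $H'(f)\ge(2\kappa-2)H(f)$, the net coefficient of $I$ is $\beta+\tfrac{(1-2\beta)(2\kappa-2)}{2}$, which is strictly positive precisely because $\kappa>\tfrac32$; hence $I$ is controlled by the cut-off integral and by $\int e^{2(1+\beta)f}H^{2-2\beta}(f)\phi^{s-2}$. Young's inequality would absorb the cut-off term as $\epsilon I+\tfrac{C}{R^2}\int e^{2(1+\beta)f}H^{1-2\beta}(f)\phi^{s-4}$, and the conversions $e^{2(1+\beta)f}H^{1-2\beta}\le Ce^{2(1-\beta)f}H$ and $e^{2(1+\beta)f}H^{2-2\beta}\le Ce^{2(1-\beta)f}H^2$, both from $H(f)\ge Ce^{2f}$ for $R$ large, would yield exactly (4.1).

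For (4.2), I would keep $\Psi^{-\beta}$ and write the left-hand side as $-\tfrac12\int e^{2f}|\partial f|^2\mathrm{Re}(f_{\alpha\bar\alpha})\Psi^{-\beta}\phi^{s-2}-\int e^{2f}H|\partial f|^2\Psi^{-\beta}\phi^{s-2}$, then integrate the divergence term by parts against $e^{2f}|\partial f|^2\Psi^{-\beta}\phi^{s-2}$. Three kinds of derivative fall on the weight. The derivatives landing on $e^{2f}$ and on $|\partial f|^2$ each reproduce a copy of the left-hand side (using $f_\alpha f_{\bar\alpha}=|\partial f|^2$ and, via (2.8), $\mathrm{Re}(f_\alpha(|\partial f|^2)_{\bar\alpha})=\mathrm{Re}(f_\alpha(D_{\bar\alpha}+E_{\bar\alpha}))+|\partial f|^4-|\partial f|^2H$); collecting these copies and solving for the left-hand side produces the term $3\int e^{2f}H|\partial f|^2\Psi^{-\beta}\phi^{s-2}$, which is the middle term of (4.2). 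The remaining contributions are error terms: the piece $\mathrm{Re}(f_\alpha(D_{\bar\alpha}+E_{\bar\alpha}))$, bounded by $|D_{\bar\alpha}+E_{\bar\alpha}|\le C\sqrt{\mathcal{M}}e^{-f}$; the piece coming from $\Psi^{-\beta}_{\bar\alpha}$, bounded via $|\Psi_{\bar\alpha}|\le C|g|\sqrt{\mathcal{M}}e^{-3f}$ from (2.16) together with $|g|\ge|\partial f|^2$ and carrying an extra factor $\beta$; and the cut-off term. Using $|g|\ge H$ I would reduce the first two to $\int|\partial f|\sqrt{\mathcal{M}}e^{f}\Psi^{-\beta}\phi^{s-2}$, which Young's inequality splits as $\epsilon R^2\int\Psi^{-\beta}\mathcal{M}\phi^s+\tfrac{C}{R^2}\int e^{2f}|\partial f|^2\Psi^{-\beta}\phi^{s-4}$, and I would handle the cut-off term likewise, absorbing its $|\partial f|^4$-part back into the left-hand side. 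This gives exactly (4.2).

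The step I expect to be the main obstacle is the coefficient and sign bookkeeping after the integration by parts. In (4.1) one must verify that the net coefficient of the leading term $I$ remains strictly positive once $H'$ is bounded below, which is exactly where $\kappa>\tfrac32$ enters. In (4.2) the delicate point is instead the derivative of $\Psi^{-\beta}$: one must check, using $|g|\ge\max\{H,|\partial f|^2\}$, that every $\sqrt{\mathcal{M}}$-contribution collapses to the single clean form $\int|\partial f|\sqrt{\mathcal{M}}e^{f}\Psi^{-\beta}\phi^{s-2}$ with no leftover negative power of $e^{f}$, so that the weights $\Psi^{-\beta}$ demanded by the target are preserved exactly and the $\mathcal{M}$-term emerges with the harmless coefficient $\epsilon R^2$.
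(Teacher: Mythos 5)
Your proposal is correct and follows essentially the same route as the paper: both inequalities come from substituting $|\partial f|^2=-\tfrac12\mathrm{Re}(f_{\alpha\bar\alpha})-H(f)$, integrating by parts, using $H'\geq(2\kappa-2)H$ to keep the leading coefficient positive for (4.1), and estimating the $D_{\bar\alpha}+E_{\bar\alpha}$, $\Psi_{\bar\alpha}$ and cut-off terms exactly as you describe for (4.2). The only (immaterial) difference is that in (4.1) the paper converts the weight to $e^{2(1-\beta)f}H(f)$ via $H(f)\geq Ce^{2f}$ \emph{before} integrating by parts rather than after, so its positivity condition reads $\tfrac12-\beta>0$ with $H'>H$ instead of your $\beta+\tfrac{(1-2\beta)(2\kappa-2)}{2}>0$.
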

          \begin{proof}
              Note that $R>0$ large enough, we have $H(f)\geq Ce^{2}f$. From (3.4), we have
              \begin{align}
                  \int_{A_R}e^{2f}H(f)|\partial f|^2\Psi^{-\beta}\phi^{s-2}\leq C\int_{A_R}e^{2(1-\beta)f}H(f)|\partial f|^2\phi^{s-2}.
              \end{align}
              A direct computation shows that
              \begin{align*}
                  &\int_{A_R}e^{2(1-\beta)f}H(f)|\partial f|^2\phi^{s-2}=\int_{A_R}e^{2(1-\beta)f}H(f)(Re(g)-H(f))\phi^{s-2}
                  \\
                  =&-\frac{1}{2}\int_{A_R}e^{2(1-\beta)f}H(f)Re(f_{\alpha\bar{\alpha}})\phi^{s-2}-\int_{A_R}e^{2(1-\beta)f}H^2(f)\phi^{s-2}
                  \\
                  =& (1-\beta)\int_{A_R}e^{2(1-\beta)f}H(f)|\partial f|^2\phi^{s-2}+\frac{s-2}{2}\int_{A_R}e^{2(1-\beta)f}H(f)Re(f_{\alpha}\phi_{\bar{\alpha}})\phi^{s-3}
                  \\
                  &+\frac{1}{2}\int_{A_R}e^{2(1-\beta)f}H^{'}(f)Re(f_{\alpha\bar{\alpha}})\phi^{s-2}-\int_{A_R}e^{2(1-\beta)f}H^2(f)\phi^{s-2}.
              \end{align*}
              Since $ H^{'}(f)>H(f)$, we have
              \begin{align*}
                  &(\frac{1}{2}-\beta)\int_{A_R}e^{2(1-\beta)f}H(f)|\partial f|^2\phi^{s-2}
                  \\
                  \leq& \int_{A_R}e^{2(1-\beta)f}H^2(f)\phi^{s-2}-\frac{s-2}{2}\int_{A_R}e^{2(1-\beta)f}H(f)|\partial f|\cdot|\partial \phi|\phi^{s-3}
                  \\
                  \leq &\epsilon \int_{A_R}e^{2(1-\beta)f}H(f)|\partial f|^2\phi^{s-2}+\frac{C}{R^2}\int_{A_R}e^{2(1-\beta)f}H(f)\phi^{s-4}
                  \\
                  &+C\int_{A_R}e^{2(1-\beta)f}H^2(f)\phi^{s-2}.
              \end{align*}
              Letting $0<\epsilon<\frac{1}{2}-\beta$, we derive that 
              \begin{align*}
                  &\int_{A_R}e^{2(1-\beta)f}H(f)|\partial f|^2\phi^{s-2}
                  \\
                  \leq &\frac{C}{R^2}\int_{A_R}e^{2(1-\beta)f}H(f)\phi^{s-4}
                  +C\int_{A_R}e^{2(1-\beta)f}H^2(f)\phi^{s-2}.
              \end{align*}
              Then (4.1) follows from (4.3). In order to obtain (4.2), we compute that
              \begin{align*}
                  &\int_{A_R}e^{2f}|\partial f|^4\Psi^{-\beta}\phi^{s-2}
                  \\
                  =&\frac{1}{2}\int_{A_R}e^{2f}|\partial f|^2Re(f_{\alpha\bar{\alpha}})\Psi^{-\beta}\phi^{s-2}-\int_{A_R}e^{2f}|\partial f|^2H(f)\Psi^{-\beta}\phi^{s-2}
                  \\
                  =&\int_{A_R}e^{2f}|\partial f|^4\Psi^{-\beta}\phi^{s-2}+\frac{s-2}{2}\int_{A_R}e^{2f}|\partial f|^2Re(f_\alpha\phi_{\bar{\alpha}})\Psi^{-\beta}\phi^{s-3}
                  \\
                  &+\frac{1}{2}\int_{A_R}e^{2f}Re(f_\alpha|\partial f|^2_{\bar{\alpha}})\Psi^{-\beta}\phi^{s-2}-\frac{\beta}{2}\int_{A_R}e^{2f}|\partial f|^2Re(f_\alpha\Psi_{\bar{\alpha}})\Psi^{-\beta-1}\phi^{s-2}
                  \\
                  &-\int_{A_R}e^{2f}|\partial f|^2H(f)\Psi^{-\beta}\phi^{s-2},
              \end{align*}
              that is,
              \begin{align*}
                  0=&\frac{s-2}{2}\int_{A_R}e^{2f}|\partial f|^2Re(f_\alpha\phi_{\bar{\alpha}})\Psi^{-\beta}\phi^{s-3}+\frac{1}{2}\int_{A_R}e^{2f}Re(f_\alpha|\partial f|^2_{\bar{\alpha}})\Psi^{-\beta}\phi^{s-2}
                  \\
                  &-\frac{\beta}{2}\int_{A_R}e^{2f}|\partial f|^2Re(f_\alpha\Psi_{\bar{\alpha}})\Psi^{-\beta-1}\phi^{s-2}-\int_{A_R}e^{2f}|\partial f|^2H(f)\Psi^{-\beta}\phi^{s-2}
                  \\
                  =&\frac{s-2}{2}\int_{A_R}e^{2f}|\partial f|^2Re(f_\alpha\phi_{\bar{\alpha}})\Psi^{-\beta}\phi^{s-3}+\frac{1}{2}\int_{A_R}e^{2f}Re(f_\alpha(D_{\bar{\alpha}}+E_{\bar{\alpha}}))\Psi^{-\beta}\phi^{s-2}
                  \\
                  &-\frac{\beta}{2}\int_{A_R}e^{2f}|\partial f|^2Re(f_\alpha\Psi_{\bar{\alpha}})\Psi^{-\beta-1}\phi^{s-2}-2\int_{A_R}e^{2f}|\partial f|^2H(f)\Psi^{-\beta}\phi^{s-2}
                  \\
                  &+\frac{1}{2}\int_{A_R}e^{2f}|\partial f|^2Re(\bar{g})\Psi^{-\beta}\phi^{s-2}.
              \end{align*}
              Noting that $Re(\bar{g})=|\partial f|^2+H(f)$ and $\Psi^{-2}|\partial f|^4|g|^2\leq e^{4f}$, we have
              \begin{align*}
                  &\frac{1}{2}\int_{A_R}e^{2f}|\partial f|^4\Psi^{-\beta}\phi^{s-2}
                  \\
                  \leq &\epsilon^{'} \int_{A_R}e^{2f}|\partial f|^4\Psi^{-\beta}\phi^{s-2}+\frac{C}{R^2}\int_{A_R}e^{2f}|\partial f|^2\Psi^{-\beta}\phi^{s-4}+\epsilon R^2\int_{A_R}\Psi^{-\beta}\mathcal{M}\phi^s
                  \\
                  &+\frac{C}{R^2}\int_{A_R}e^{-2f}|\partial f|^6|g|^2\Psi^{-\beta-2}\phi^{s-4}+\frac{3}{2}\int_{A_R}e^{2f}|\partial f|^2H(f)\Psi^{-\beta}\phi^{s-2}
                  \\
                  \leq & \epsilon^{'} \int_{A_R}e^{2f}|\partial f|^4\Psi^{-\beta}\phi^{s-2}+\frac{C}{R^2}\int_{A_R}e^{2f}|\partial f|^2\Psi^{-\beta}\phi^{s-4}+\epsilon R^2\int_{A_R}\Psi^{-\beta}\mathcal{M}\phi^s
                  \\
                  &+\frac{3}{2}\int_{A_R}e^{2f}|\partial f|^2H(f)\Psi^{-\beta}\phi^{s-2}.
              \end{align*}
              By choosing $\epsilon^{'}<\frac{1}{2}$, we obtain (4.2).
          \end{proof}
          Now we are ready to prove the Theorem 1.2.
          
       ~\\
      $\mathbf{Proof\ of\ Theorem\ 1.2.}$ Let $f$, $g$, $H$, $\phi$ and $\mathcal{M}$ be defined as in above. Assume that $R>0$ large enough such that $H(f)\geq Ce^{2f}$. From Lemma 3.1 and Lemma 4.1, we have
      \begin{align*}
          &\int_M\Psi^{-\beta}\mathcal{M}\phi^s\leq \frac{C}{R^4}\int_{A_R}e^{2f}|\partial f|^2\Psi^{-\beta}\phi^{s-4}+\frac{C}{R^2}\int_{A_R}e^{2f}|\partial f|^4\Psi^{-\beta}\phi^{s-2}
          \\
          \leq &\frac{C}{R^4}\int_{A_R}e^{2f}|\partial f|^2\Psi^{-\beta}\phi^{s-4}+\epsilon \int_{A_R}\Psi^{-\beta}\mathcal{M}\phi^s+\frac{C}{R^4}\int_{A_R}e^{2(1-\beta)f}H(f)\phi^{s-4}
          \\
          &+\frac{C}{R^2}\int_{A_R}e^{2(1-\beta)f}H^2(f)\phi^{s-2}.
      \end{align*}
      Applying Lemma 3.2 and letting $0<\epsilon<1$, we have
      \begin{align*}
          &\int_M\Psi^{-\beta}\mathcal{M}\phi^s
          \\
          \leq& \frac{C}{R^{8-2\beta}}Vol(B_{2R})+\frac{C}{R^4}\int_{A_R}e^{2(1-\beta)f}H(f)\phi^{s-4}
          +\frac{C}{R^2}\int_{A_R}e^{2(1-\beta)f}H^2(f)\phi^{s-2}.
      \end{align*}
      Noting that $H(f)\geq Ce^{2f}$ and using H$\Ddot{\mathrm{o}} $lder’s inequality, we derive that
      \begin{align*}
          &\int_M\Psi^{-\beta}\mathcal{M}\phi^s
          \\
          \leq& \frac{C}{R^{8-2\beta}}Vol(B_{2R})+\frac{C}{R^4}\int_{A_R}H^{2-\beta}(f)\phi^{s-4}
          +\frac{C}{R^2}\int_{A_R}H^{3-\beta}(f)\phi^{s-2}
          \\
          \leq &\frac{C}{R^{8-2\beta}}Vol(B_{2R})+\frac{C}{R^4}\left(\int_{A_R}H^3(f) \right)^{\frac{3-\beta}{3}}\cdot (Vol(B_{2R}))^{\frac{\beta}{3}}
          \\
          &+\frac{C}{R^2}\left(\int_{A_R}H^3(f) \right)^{\frac{2-\beta}{3}}\cdot (Vol(B_{2R}))^{\frac{1+\beta}{3}}
          \\
          \leq & \frac{C}{R^{2-2\beta}}+\frac{C}{R^{2-\sigma-(2-\frac{\sigma}{3})\beta}}+\frac{C}{R^{2-\frac{2\sigma}{3}-(2-\frac{\sigma}{3})\beta}},
      \end{align*}
      where the last inequality follows from (1.9). Letting $R\rightarrow \infty$, we have
      \begin{align*}
          \int_M\Psi^{-\beta}\mathcal{M}=0,
      \end{align*}
       provided $\beta$ is small enough. In particular, $\mathcal{M} =0$. Similar to the proof of Theorem 1.1, we have $H^{'}(f)=2H(f)$. The conclusion then follows from \cite{CMRW}. \qed

       \section{Proof of Theorem 1.3}
        Let $(M^{ 2n+1}, HM, J,\theta),\ n\geq 3$ be a complete noncompact Sasakian manifold with $Ric_b\geq 0 $ and 
           \begin{align*}
               V(R)\leq C R^{2n+2}
           \end{align*}
           for some $C > 0$ and every $R > 0$ large enough. Let $f$, $g$, $H$, $\phi$ and $\mathcal{M}$ be defined as in Section 2. In this section, we assume that there exist constants $\frac{2}{n}<\kappa\leq 1+\frac{2}{n} $ such that 
          \begin{align}
              tF^{'}(t)-\kappa F(t)\geq 0,\ \forall t>0. \nonumber
          \end{align} 
          In particular, we have
          \begin{align*}
              H^{'}(f)\geq (n\kappa-n)H(f)>(2-n)H(f).
          \end{align*}
          We first state the following inequalities.
          \begin{lemma}
               Let $\epsilon> 0$ small enough and $s > 0$ large enough. There exists a constant $C>0$ such that 
               \begin{align}
                   &\int_{A_R}e^{2(n-1)f}|\partial f|^2\phi^{s-4}\leq C\int_{A_R}e^{2(n-1)f}H(f)\phi^{s-4}+\frac{C}{R^{2n}}Vol(B_{2R}),
                   \\
                   &\int_{A_R}e^{2(n-1)f}H(f)|\partial f|^2\phi^{s-2}\leq C\int_{A_R}e^{2(n-1)f}H^2(f)\phi^{s-2}+\frac{C}{R^2}\int_{A_R}e^{2(n-1)f}H(f)\phi^{s-4}
               \end{align}
               and
               \begin{align}
                   &\int_{A_R}e^{2(n-1)f}|\partial f|^4\phi^{s-2}
                   \nonumber\\
                   \leq&C\int_{A_R}e^{2(n-1)f}H(f)|\partial f|^2\phi^{s-2}+\frac{C}{R^2}\int_{A_R}e^{2(n-1)f}|\partial f|^2\phi^{s-4}
                   +\epsilon R^2 \int_{A_R}\mathcal{M}\phi^s
               \end{align}
               for $R>0$ large enough.
          \end{lemma}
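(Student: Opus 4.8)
The plan is to prove all three inequalities by the single mechanism underlying Lemma~3.2 and Lemma~4.1: using $f_{\alpha\bar{\alpha}}=-ng$ from (2.5) together with $Re(g)=|\partial f|^2+H(f)$, I rewrite $|\partial f|^2=-\frac{1}{n}Re(f_{\alpha\bar{\alpha}})-H(f)$, integrate the $f_{\alpha\bar{\alpha}}$ term by parts against a suitable weight, and close by Young's inequality. The structural point specific to $n\geq 3$ is that, after integration by parts, the self-coefficient $1-\frac{2(n-1)}{n}=\frac{2-n}{n}$ is strictly negative, so dividing by it reverses the inequality and makes the coefficient of the surviving positive integral come out with the correct sign; this is also why no weight $\Psi^{-\beta}$ is needed in this section.

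For (5.1) I take the weight $e^{2(n-1)f}\phi^{s-4}$. Integrating $-\frac{1}{n}\int e^{2(n-1)f}Re(f_{\alpha\bar{\alpha}})\phi^{s-4}$ by parts produces $\frac{2(n-1)}{n}\int e^{2(n-1)f}|\partial f|^2\phi^{s-4}$ plus a cutoff cross term, and solving for $\int e^{2(n-1)f}|\partial f|^2\phi^{s-4}$ (legitimate since $\frac{2-n}{n}\neq 0$) leaves, after Young on the cross term, the quantity $\frac{C}{R^2}\int_{A_R}e^{2(n-1)f}\phi^{s-6}$ to be absorbed. Here the key is to use $H(f)\geq Ce^{2f}$ on $A_R$ (valid for $R$ large, since $u\to 0$) to write $e^{2(n-1)f}\leq C[e^{2(n-1)f}H(f)]^{a}$ with $a=\frac{n-1}{n}$, and then apply Young against the target $e^{2(n-1)f}H(f)\phi^{s-4}$; the conjugate exponent is $\frac{1}{1-a}=n$, so the factor $\frac{1}{R^2}$ is raised to the $n$-th power and yields exactly $\frac{C}{R^{2n}}Vol(B_{2R})$, while the $\delta$-piece merges with the target $H(f)$-integral.

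For (5.2) the same integration by parts is run with weight $e^{2(n-1)f}H(f)\phi^{s-2}$; differentiating $H(f)$ contributes an extra $\frac{1}{n}\int e^{2(n-1)f}H'(f)|\partial f|^2\phi^{s-2}$, which combines with the self-coefficient into $\frac{1}{n}\int e^{2(n-1)f}[H'(f)+(n-2)H(f)]|\partial f|^2\phi^{s-2}$. The hypothesis $\kappa>\frac{2}{n}$, i.e. $H'(f)\geq(n\kappa-n)H(f)$, gives $H'(f)+(n-2)H(f)\geq(n\kappa-2)H(f)$ with $n\kappa-2>0$; this positivity is precisely what permits absorbing the $H(f)|\partial f|^2$-integral on the left and bounding it by the $H^2(f)$ term plus the cutoff remainder $\frac{C}{R^2}\int e^{2(n-1)f}H(f)\phi^{s-4}$. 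For (5.3) I use weight $e^{2(n-1)f}|\partial f|^2\phi^{s-2}$; the new ingredient is $\frac{1}{n}\int e^{2(n-1)f}Re(f_\alpha|\partial f|^2_{\bar{\alpha}})\phi^{s-2}$, and expanding $|\partial f|^2_{\bar{\alpha}}=D_{\bar{\alpha}}+E_{\bar{\alpha}}+\bar{g}f_{\bar{\alpha}}-2H(f)f_{\bar{\alpha}}$ by (2.8) separates it into a $|\partial f|^4$ piece (recombining with the self-coefficient, again via the sign flip $\frac{1-n}{n}<0$), an $H(f)|\partial f|^2$ piece, and the genuinely new $\frac{1}{n}\int e^{2(n-1)f}Re(f_\alpha(D_{\bar{\alpha}}+E_{\bar{\alpha}}))\phi^{s-2}$. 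Bounding $|D_{\bar{\alpha}}+E_{\bar{\alpha}}|\leq C\sqrt{\mathcal{M}}\,e^{-(n-1)f}$ as in the proof of Lemma~2.2 and splitting by Young with a factor $R^2$ turns the latter into $\frac{C}{R^2}\int e^{2(n-1)f}|\partial f|^2\phi^{s-4}+\epsilon R^2\int_{A_R}\mathcal{M}\phi^s$, while the cutoff cross term contributes another $\frac{C}{R^2}\int e^{2(n-1)f}|\partial f|^2\phi^{s-4}$, giving (5.3).

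The main obstacle I anticipate is not conceptual but lies in the sign analysis and the cutoff bookkeeping. Every absorption step rests on $\frac{2-n}{n}<0$ (or $\frac{1-n}{n}<0$) reversing the inequality in the favorable direction, together with $n\kappa-2>0$ keeping the leftover coefficient positive, so these signs must be verified before any term is moved across the inequality. One must also track the powers of $\phi$ so that the Young splittings land exactly on the $\phi^{s-2}$ and $\phi^{s-4}$ exponents appearing on the right-hand sides, and in (5.1) one must ensure that the choice $a=\frac{n-1}{n}$ is used, so that the conjugate exponent is $n$ and the sharp $R^{-2n}$ decay is obtained rather than merely $R^{-2}$.
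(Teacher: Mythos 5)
Your proposal is correct and follows essentially the same route as the paper: each inequality is obtained by substituting $|\partial f|^2=-\frac{1}{n}Re(f_{\alpha\bar\alpha})-H(f)$, integrating by parts against the indicated weight, exploiting that the resulting self-coefficient ($\frac{n-2}{n}$, resp. $\frac{n\kappa-2}{n}$, resp. $\frac{n-1}{n}$) is positive for $n\geq 3$ and $\kappa>\frac{2}{n}$, and closing with Young's inequality; the paper likewise handles the leftover $\frac{C}{R^2}\int e^{2(n-1)f}\phi^{s-6}$ term in (5.1) via $H(f)\geq Ce^{2f}$ and Young with conjugate exponent $n$, and the $Re(f_\alpha(D_{\bar\alpha}+E_{\bar\alpha}))$ term in (5.3) via $|D_{\bar\alpha}+E_{\bar\alpha}|\leq C\sqrt{\mathcal{M}}e^{-(n-1)f}$ exactly as you describe. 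No substantive differences.
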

      \begin{proof}
          Observe that
          \begin{align*}
             & \int_{A_R}e^{2(n-1)f}|\partial f|^2\phi^{s-4}=\int_{A_R}e^{2(n-1)f}(Re(g)-H(f))\phi^{s-4}
             \\
             =&-\frac{1}{n}\int_{A_R}e^{2(n-1)f}Re(f_{\alpha\bar{\alpha}})\phi^{s-4}-\int_{A_R}e^{2(n-1)f}H(f)\phi^{s-4}
             \\
             =&\frac{2(n-1)}{n}\int_{A_R}e^{2(n-1)f}|\partial f|^2\phi^{s-4}+\frac{s-4}{n}\int_{A_R}e^{2(n-1)f}Re(f_{\alpha}\phi_{\bar{\alpha}})\phi^{s-5}
             \\
             &-\int_{A_R}e^{2(n-1)f}H(f)\phi^{s-4}.
          \end{align*}
          This yields that 
          \begin{align*}
              &\frac{n-2}{n}\int_{A_R}e^{2(n-1)f}|\partial f|^2\phi^{s-4}
              \\
              \leq&C\int_{A_R}e^{2(n-1)f}|\partial f|\cdot|\partial \phi|\phi^{s-5}+ \int_{A_R}e^{2(n-1)f}H(f)\phi^{s-4}
              \\
              \leq & \epsilon \int_{A_R}e^{2(n-1)f}|\partial f|^2\phi^{s-4}+\frac{C}{R^2}\int_{A_R}e^{2(n-1)f}\phi^{s-6}+\int_{A_R}e^{2(n-1)f}H(f)\phi^{s-4}.
          \end{align*}
          Letting $0<\epsilon<\frac{n-2}{n}$, (5.1) follows from
          \begin{align*}
              &\int_{A_R}e^{2(n-1)f}|\partial f|^2\phi^{s-4}
              \\
              \leq& \frac{C}{R^2}\int_{A_R}e^{2(n-1)f}\phi^{s-6}+C\int_{A_R}e^{2(n-1)f}H(f)\phi^{s-4}
              \\
              \leq & \frac{C}{R^2}\int_{A_R}e^{2(n-1-\frac{n-1}{n})f}H^{\frac{n-1}{n}}(f)\phi^{s-6}+C\int_{A_R}e^{2(n-1)f}H(f)\phi^{s-4}
              \\
              \leq & C\int_{A_R}e^{2(n-1)f}H(f)\phi^{s-4}+\frac{C}{R^{2n}}\int \phi^{s-4-2n}.
          \end{align*}
          Here we assume that $ R>0$ large enough such that $H(f)\geq Ce^{2f}$. Similarly, 
          \begin{align*}
              &\int_{A_R}e^{2(n-1)f}H(f)|\partial f|^2\phi^{s-2}
              \\
              =&\frac{2(n-1)}{n}\int_{A_R}e^{2(n-1)f}H(f)|\partial f|^2\phi^{s-2}+\frac{s-2}{n}\int_{A_R}e^{2(n-1)f}H(f)Re(f_{\alpha}\phi_{\bar{\alpha}})\phi^{s-3}
             \\
             &+\frac{1}{n}\int_{A_R}e^{2(n-1)f}H^{'}(f)|\partial f|^2\phi^{s-2}-\int_{A_R}e^{2(n-1)f}H^2(f)\phi^{s-2}.
          \end{align*}
          Noting that $ H^{'}(f)\geq (n\kappa-n)H(f)>(2-n)H(f)$, we have 
          \begin{align*}
              &\frac{n\kappa-2}{n}\int_{A_R}e^{2(n-1)f}H(f)|\partial f|^2\phi^{s-2}
              \\
              \leq &\int_{A_R}e^{2(n-1)f}H^2(f)\phi^{s-2}+\epsilon \int_{A_R}e^{2(n-1)f}H(f)|\partial f|^2\phi^{s-2}
              \\
              &+\frac{C}{R^2}\int_{A_R}e^{2(n-1)f}H(f)\phi^{s-4}.
          \end{align*}
          By choosing $\epsilon$ small enough, we obtain (5.2). Finally, \begin{align*}
              &\int_{A_R}e^{2(n-1)f}|\partial f|^4\phi^{s-2}
              \\
              =&\frac{2(n-1)}{n}\int_{A_R}e^{2(n-1)f}|\partial f|^4\phi^{s-2}+\frac{s-2}{n}\int_{A_R}e^{2(n-1)f}|\partial f|^2Re(f_{\alpha}\phi_{\bar{\alpha}})\phi^{s-3}
             \\
             &+\frac{1}{n}\int_{A_R}e^{2(n-1)f}Re(f_\alpha|\partial f|^2_{\bar{\alpha}})\phi^{s-2}-\int_{A_R}e^{2(n-1)f}H(f)|\partial f|^2\phi^{s-2}.
          \end{align*}
          A direct calculation implies that
          \begin{align*}
              &\frac{n-2}{n}\int_{A_R}e^{2(n-1)f}|\partial f|^4\phi^{s-2}
              \\
              =&\int_{A_R}e^{2(n-1)f}H(f)|\partial f|^2\phi^{s-2}-\frac{s-2}{n}\int_{A_R}e^{2(n-1)f}|\partial f|^2Re(f_{\alpha}\phi_{\bar{\alpha}})\phi^{s-3}
              \\
              &-\frac{1}{n}\int_{A_R}e^{2(n-1)f}Re(f_\alpha(D_{\bar{\alpha}}+E_{\bar{\alpha}}))\phi^{s-2}-\frac{1}{n}\int_{A_R}e^{2(n-1)f}Re(\bar{g})|\partial f|^2\phi^{s-2}
              \\
              &+\frac{2}{n}\int_{A_R}e^{2(n-1)f}H(f)|\partial f|^2\phi^{s-2}
              \\
              \leq &(1+\frac{1}{n})\int_{A_R}e^{2(n-1)f}H(f)|\partial f|^2\phi^{s-2}+C\int_{A_R}e^{(n-1)f}|\partial f|\sqrt{\mathcal{M}}\phi^{s-2}
              \\
              &-(\frac{1}{n}-\epsilon)\int_{A_R}e^{2(n-1)f}|\partial f|^4\phi^{s-2} +\frac{C}{R^2}\int_{A_R}e^{2(n-1)f}|\partial f|^2\phi^{s-4}
              \\
              \leq &(1+\frac{1}{n})\int_{A_R}e^{2(n-1)f}H(f)|\partial f|^2\phi^{s-2}+\epsilon R^2\int_{A_R}\mathcal{M}\phi^{s}
              \\
              &-(\frac{1}{n}-\epsilon)\int_{A_R}e^{2(n-1)f}|\partial f|^4\phi^{s-2} +\frac{C}{R^2}\int_{A_R}e^{2(n-1)f}|\partial f|^2\phi^{s-4}.
          \end{align*}
          Hence for $\epsilon$ sufficiently small, we deduce (5.3).
      \end{proof}
      Noting that in the proof of Lemma 3.1, we do not use the condition $H^{'}(f)\geq H(f)$. Hence Lemma 3.1 is also valid under the conditions in Theorem 1.3. In this case, we assume that $\beta=0$.

      ~\\
      $\mathbf{Proof\ of\ Theorem\ 1.3.}$ Let $f$, $g$, $H$, $\phi$ and $\mathcal{M}$ be defined as in above. Assume that $R>0$ large enough such that $H(f)\geq Ce^{2f}$. From Lemma 3.1 and Lemma 5.1, we have
      \begin{align*}
          &\int_M\mathcal{M}\phi^s
          \\
          \leq &\frac{C}{R^4}\int_{A_R}e^{2(n-1)f}|\partial f|^2\phi^{s-4}+\frac{C}{R^2}\int_{A_R}e^{2(n-1)f}|\partial f|^4\phi^{s-2}
          \\
          \leq &\frac{C}{R^{2n+4}}Vol(B_{2R})+\frac{C}{R^{4}}\int_{A_R}e^{2(n-1)f}H(f)\phi^{s-4}
          \\
          &+\epsilon \int_M\mathcal{M}\phi^s+\frac{C}{R^{2}}\int_{A_R}e^{2(n-1)f}H^2(f)\phi^{s-2}.
      \end{align*}
      By choosing $\epsilon>0$ small enough and using $H(f)\geq Ce^{2f}$, we find that
      \begin{align*}
          &\int_M\mathcal{M}\phi^s
          \\
          \leq &\frac{C}{R^{2n+4}}Vol(B_{2R})+\frac{C}{R^{4}}\int_{A_R}e^{2(n-1)f}H(f)\phi^{s-4}+\frac{C}{R^{2}}\int_{A_R}e^{2(n-1)f}H^2(f)\phi^{s-2}
          \\
          \leq &\frac{C}{R^{2}}+\frac{C}{R^{4}}\int_{A_R}H^n(f)\phi^{s-4}+\frac{C}{R^{2}}\int_{A_R}H^{n+1}(f)\phi^{s-2}
          \\
          \leq &\frac{C}{R^{2}}+\frac{C}{R^{4}}\left(\int_{A_R}H^{n+1}(f)\right)^{\frac{n}{n+1}}\cdot\left(Vol(B_{2R})\right)^{\frac{1}{n+1}}+\frac{C}{R^{2}}\int_{A_R}H^{n+1}(f)
          \\
          \leq & \frac{C}{R^{2}}+C+\frac{C}{R^{\frac{2}{n+1}}}\leq C.
      \end{align*}
      It follows that 
      \begin{align*}
          \int_M\mathcal{M}\leq C,
      \end{align*}
      which implies that 
      \begin{align*}
          &\int_{A_R}e^{2(n-1)f}|g|^2\Psi^{-\beta}\phi^{s-2}
          \\
              \leq& \frac{C}{R^2}\int_{A_R}e^{2(n-1)f}|\partial f|^2\phi^{s-4}+\epsilon R^2 \int_M\mathcal{M}\phi^s
              +C\int_{A_R}e^{2(n-1)f}|\partial f|^4\phi^{s-2}
              \\
              \leq &CR^2.
      \end{align*}
      Finally, by Lemma 2.2, we obtain
      \begin{align*}
           \int_M\mathcal{M}=0,
       \end{align*}
      that is, $\mathcal{M}=0 $. Hence we deduce the conclusion as in the proof of Theorem 1.1 and Theorem 1.2.  \qed

        \section{ Acknowledgments}
        The author thanks Professor Yuxin Dong and Professor Xiaohua Zhu for their continued support and encouragement.

\bibliographystyle{siam}
\bibliography{ref}

~\\
  Biqiang Zhao
  \\
  $Beijing\ International\ Center\ for$
  \\
  $Mathematical\ Research $
\\
  $ Peking\ University$
\\
   $Beijing\ 100871 ,$ $P.R.\ China $

\end{document}